\newtheorem{theorem}{Theorem}[section]
\newtheorem{corollary}[theorem]{Corollary}
\newtheorem{lemma}[theorem]{Lemma}
\newtheorem{definition}[theorem]{Definition}
\newtheorem*{assumption}{Assumption}
\numberwithin{equation}{section}
\def\a{\alpha}				
\def\e{\epsilon}				
\def\va{v_{\a}}				
\def\tu{\tilde{u}}			
\def\E{\mathbb{E}}			
\def\PR{\mathbb{P}}			
\def\R{\mathbb{R}}			
\def\U{\mathbb{U}}			
\def\Rp{\R^{+}}				
\def\bR{\overline{\R}}		
\def\X{\mathbb{X}}			
\def\A{\mathbb{A}}			
\def\K{\mathbb{K}}			
\def\PS{\Pi}					
\def\argmin{\arg\min}		
\def\Z{\mathbb{Z}}			
\def\N{\mathbb{N}}			
\title{
On the Optimality Equation for Average Cost Markov Decision Processes
and its Validity for Inventory Control}
\author{
Eugene~A.~Feinberg, Yan~Liang \\
\small
\textit{Department of Applied Mathematics and Statistics} \\
\small
\textit{Stony Brook University, Stony Brook, NY 11794} \\
\small
\textit{eugene.feinberg@stonybrook.edu, yan.liang@stonybrook.edu}
}
\date{}
\begin{document}
\maketitle
%
%
\begin{abstract}
\textit{
	As is well known, average-cost optimality inequalities imply the existence
	of stationary optimal policies for Markov Decision Processes with average costs per unit time, and these
	inequalities hold under broad natural conditions. This paper provides
	sufficient conditions for the validity of the average-cost optimality equation
	for an infinite state problem with weakly continuous transition probabilities and with possibly unbounded one-step costs and noncompact action sets.
    These conditions also imply the convergence of sequences of discounted relative value functions to  average-cost relative value functions and the continuity of average-cost relative value functions.
    As shown in the paper, the classic periodic-review inventory control problem satisfies these conditions.
	Therefore, the optimality inequality holds in the form of an equality with a continuous average-cost relative value function for this problem. In addition, the $K$-convexity of discounted relative value functions and their convergence to average-cost relative value functions, when the discount factor increases to 1,  imply the $K$-convexity of average-cost relative value functions. This implies that average-cost optimal $(s,S)$ policies for the inventory control problem can be derived from the average-cost optimality equation.
}
\end{abstract}
%
%
\smallskip
\textit{
\textbf{Keywords:}
	dynamic programming, average-cost optimal equation, inventory control, $(s,S)$ policies.
}
\section{Introduction}
\label{sec:introduction}

For Markov Decision Processes (MDPs) with average costs per unit time, the existence of stationary optimal policies follows from the validity of the average-cost optimality inequality (ACOI).  Feinberg et al.~\cite{FEINBERG2012} established broad sufficient conditions for the validity of ACOIs for MDPs with weakly continuous transition probabilities and possibly noncompact action sets and unbounded one-step costs.  In particular, these and even stronger conditions hold for the classic periodic-review inventory control problem with backorders; see Feinberg~\cite{Ftut} or Feinberg and Lewis~\cite{FEINBERG2015}.
Previously, Sch\"{a}l~\cite{Sch93} established sufficient conditions for the validity of ACOIs for MDPs with compact action sets and possibly unbounded costs.  Cavazos-Cadena~\cite{Cadena1991} provided an example in which the ACOI holds but the average-cost optimality equation (ACOE) does not.
This paper presents sufficient conditions for the validity of ACOEs for MDPs with infinite state spaces, weakly continuous transition probabilities and possibly noncompact action sets and unbounded one-step costs  and, by showing that the classic periodic-review inventory control problems satisfy these conditions, establishes the validity of the ACOEs for the inventory control problems.

Sufficient conditions for the validity of ACOEs for discrete-time MDPs with countable and general state spaces with setwise continuous transition probabilities
are described in  Sennott~\cite[Section 7.4]{SENNOTT1998}, \cite{SENNOTT2002} and
Hern\'{a}ndez-Lerma and Lasserre~\cite[Section 5.5]{LERMA1996}, respectively.  Zheng~\cite{Zheng} used Sennott's results to provide a simple proof of the optimality of $(s,S)$ policies for average-cost periodic-review inventory control problems with discrete demand.

Ja\'{s}kiewicz and Nowak~\cite{jn06} considered MDPs with Borel state space, compact action sets, weakly continuous transition probabilities and unbounded costs. The geometric ergodicity of
transition probabilities is assumed in Ja\'{s}kiewicz and Nowak~\cite{jn06} to ensure the
validity of the ACOEs. Costa and Dufour~\cite{cd12}  studied the validity of ACOEs for MDPs with Borel state and action spaces,
weakly continuous transition probabilities, which are positive Harris
recurrent, and with possibly noncompact action sets and unbounded costs. Neither the geometric ergodicity nor positive Harris recurrent conditions hold for the periodic-review inventory control problem.

An early attempt, to establish for problems with weakly continuous transition probabilities the results on the validity of the ACOE similar to the results in Hern\'{a}ndez-Lerma and Lasserre~\cite[Section 5.5]{LERMA1996} for problems with setwise continuous transition probabilities,  was undertaken in Montez-de-Oca~\cite{MDA}.  However, the formulations and proofs there, as well as some proofs in \cite{cd12}, relied on a technically incorrect paper with statements contradicting  a counterexample in Luque-Vasques and  Hernández-Lerma~\cite{LHL} relevant to  Berge's maximum theorem.

Section \ref{sec:model definition} of this paper  describes the general MDPs framework.
In particular, it states Assumptions (\textbf{W*}) and (\textbf{B}) from Feinberg et al.~\cite{FEINBERG2012}, which guarantee the validity of the ACOIs.
Section \ref{sec:main result} provides the sufficient conditions for the validity of the ACOEs, which extends the sufficient conditions in Hern\'{a}ndez-Lerma and Lasserre~\cite[Theorem 5.5.4]{LERMA1996} to weakly continuous transition probabilities.
By verifying these conditions,  it is shown in Section~\ref{sec:inventory control}, that the ACOE holds for the classic periodic-review inventory control problems with general demands.
The paper also establishes $K$-convexity and continuity of the average-cost relative value  function and shows that an optimal $(s,S)$ policy can be derived from the ACOE. It also shows that at the level $s$ there are at least two optimal decisions: do not order and order up to the level $S.$

\section{Model definition}
\label{sec:model definition}

Consider a discrete-time MDP with a state space $\X,$ an action space $\A,$ one-step
costs $c,$ and transition probabilities $q.$ Assume that $\X$ and $\A$ are Borel subsets
of Polish (complete separable metric) spaces.

Let  $c(x,a):\X\times\A\to\bR=\R\cup\{+\infty\}$ be the one-step cost and $q(B|x,a)$ be the transition kernel
representing the probability that the next state is in $B\in\mathcal{B}(\X),$
given that the action $a$ is chosen in the state $x.$

We recalled that a function $f:U\to \R\cup \{+\infty\}$ for a metric space $\U,$ where $U$ is a subset of a metric space $\U,$ is
called inf-compact, if for every $\lambda\in\R$ the level set $\{u\in\U :f(u)\leq \lambda \}$
is compact.
\begin{definition}\rm{(Feinberg et al.~\cite[Definition 1.1]{FEINBERG2013}, Feinberg~\cite[Definition 2.1]{Ftut})}
\label{def:k inf compact}
	A function $f:\X\times\A\to \bR$ is called $\K$-inf-compact, if for
	every nonempty compact subset $K$ of $\X$ the function $f:K\times\A\to \bR$ is inf-compact.
\end{definition}
Let the one-step cost function $c$ and transition probability $q$ satisfy the following condition.
\begin{assumption}[\textbf{W*}]

(i) c is $\K$-inf-compact and bounded below, and

(ii) the transition probability $q(\cdot|x,a)$ is weakly continuous in
  $(x,a)\in \X\times\A,$  that is, for every bounded continuous function $f:\X\to\R,$ the function
  $\tilde{f}(x,a):=\int_\X f(y)q(dy|x,a)$ is continuous on $\X\times\A.$

\end{assumption}

The decision process proceeds as follows: at each time epoch
$t=0,1,\dots,$ the current state of the system, x, is observed.
A decision-maker chooses an action a, the cost c(x,a) is
accrued, and the system moves to the next state according to
$q(\cdot|x,a).$ Let $H_t = (\X\times\A)^{t}\times\X$
be the set of histories for $t=0,1,\dots\ .$ Let $\PS$ be the set of all policies. A
(randomized) decision rule at period $t=0,1,\dots$ is a regular transition probability
$\pi_t : H_t\to \A,$ 
that is, (i) $\pi_t(\cdot|h_t)$ is a probability distribution on $\A,$
where $h_t=(x_0,a_0,x_1,\dots,a_{t-1},x_t),$ and (ii) for any measurable subset
$B\subset \A,$ the function $\pi_t(B|\cdot)$ is measurable on $H_t.$
A policy $\pi$ is a sequence $(\pi_0,\pi_1,\dots)$ of decision rules.
Moreover, $\pi$ is called non-randomized if each probability measure $\pi_t(\cdot|h_t)$ is
concentrated at one point. A non-randomized policy is called stationary if all decisions depend
only on the current state.

The Ionescu Tulcea theorem implies that an initial state $x$ and a policy $\pi$ define a unique
probability $\PR_{x}^{\pi}$ on the set of all trajectories $\mathbb{H}_{\infty}=(\X\times\A)^{\infty}$ endowed with the product of $\sigma$-field defined by Borel $\sigma$-field of $\X$ and $\A;$ see Bertsekas and Shreve~\cite[pp. 140--141]{BERTSEKAS1996} or Hern\'{a}ndez-Lerma and Lasserre~\cite[p. 178]{LERMA1996}.
Let $\mathbb{E}_{x}^{\pi}$ be an expectation with respect to $\PR_{x}^{\pi}.$

For a finite-horizon $N=0,1,\dots,$ let us define the expected total discounted costs,
\begin{equation}\label{eqn:sec_model def:finite total disc cost}
    v_{N,a}^{\pi} := \mathbb{E}_{x}^{\pi} \sum_{t=0}^{N-1}
    \alpha^{t} c(x_t,a_t), \;\; x\in\X,
\end{equation}
where $\alpha\in [0,1)$ is the discount factor and $v_{0,a}^{\pi} (x) = 0.$
When $N=\infty,$ 
equation (\ref{eqn:sec_model def:finite total disc cost}) defines an
infinite-horizon expected total discounted cost denoted by $v_{\a}^{\pi}(x).$ Let $v_\alpha:=\inf_{\pi\in\PS} v_\a^\pi(x),$ $x\in\X.$  A policy $\pi$ is called optimal for the discount factor $\a$ if $v^\pi_\a(x)=v_\a(x)$ for all $x\in\X.$

The \emph{average cost per unit time} is defined as
\begin{equation}\label{eqn:sec_model def:avg cost}
    w^{\pi}(x):=\limsup_{N\to +\infty} \frac{1}{N}v_{N,1}^{\pi} (x), \;\; x\in\X.
\end{equation}

Define the optimal value function $w(x):=\inf_{\pi\in\Pi} w^{\pi} (x),$  $x\in\X.$
 A policy $\pi$ is called
average-cost optimal if $w^{\pi}(x)=w(x)$ for all $x\in\X.$

Let
\begin{align}\label{defmauaw}
	\begin{split}
  		m_{\alpha}: = \underset{x\in\X}{\inf} v_{\alpha}(x), & \quad
  		u_{\alpha}(x): = v_{\alpha}(x) - m_{\alpha}, \\
  		\underline{w}: = \underset{\alpha\uparrow1}{\liminf}(1-\alpha)m_{\alpha}, & \quad
  		\bar{w}: = \underset{\alpha\uparrow1}{\limsup}(1-\alpha)m_{\alpha}
	\end{split}
\end{align}
The function $u_\alpha$ is called the discounted relative value function. Assume that the following assumption holds in addition to
Assumption (\textbf{W*}).

\begin{assumption}[\textbf{B}]

(i) $w^{*} := \inf_{x\in\X} w(x)< +\infty,$ and
(ii) $\underset{\alpha\in [0,1)}{\sup} u_{\alpha}(x) < \infty,$ $x\in\X.$

\end{assumption}

As follows from Sch\"al~\cite[Lemma 1.2(a)]{Sch93}, Assumption~\textbf{B}(i) implies that $m_\alpha<+\infty$ for all $\alpha\in [0,1).$  Thus, all the quantities in \eqref{defmauaw} are defined.  According to Feinberg et al.~\cite[Theorem 4]{FEINBERG2012}, if Assumptions
(\textbf{W*}) and (\textbf{B}) hold, then $\underline{w} = \bar{w}.$ In addition, for each
sequence $\{ \a_n \}_{n=1,2,\ldots}$ such that $\a_n\uparrow1$ as $n\to +\infty,$
\begin{align}
	\lim_{n\to +\infty} (1-\a_n) m_{\a_n} = \underline{w} = \bar{w}.
\label{eqn:bar w = underline w}
\end{align}

Define the following function on $\X$ for the sequence $\{ \a_n\uparrow 1 \}_{n=1,2,\ldots} :$
\begin{eqnarray}\label{EQN1}
  \tu(x) &:=& \liminf_{n\to +\infty,y\rightarrow x} u_{\a_n}(y) .
\end{eqnarray}
In words, ${\tilde u}(x)$ is the largest number such that ${\tilde u}(x)\le \liminf_{n\to\infty}u_{\alpha_n}(y_n)$ for all sequences $\{y_n\to x\}.$ Since $u_{\a}(x)$ is nonnegative by definition, then $\tu(x)$ is also nonnegative. The function $\tilde u,$ defined in \eqref{EQN1} for a sequence $\{ \a_n\uparrow 1 \}_{n=1,2,\ldots}$ of nonnegative discount factors, is called an average-cost relative value function.

\section{Average cost optimality equation}
\label{sec:main result}

If Assumptions (\textbf{W*}) and (\textbf{B}) hold, then,
according to Feinberg et al.~\cite[Corollary 2]{FEINBERG2012}, there exists a
stationary policy $\phi$ satisfying
\begin{align}
	\underline{w} + \tu(x) &\geq c(x,\phi (x)) + \int_{\X} \tu(y)q(dy|x,\phi (x)) ,
    \qquad x\in\X,
\label{eqn:ACOI}
\end{align}
with $\tu$ defined in \eqref{EQN1} for an arbitrary sequence $\{ \a_n\uparrow 1 \}_{n=1,2,\ldots},$ and
\begin{align}\label{eqsfkz}
    w^{\phi}(x)= \underline{w} = \lim_{\alpha\uparrow 1}
    (1-\alpha)v_{\alpha} (x) = \bar{w} = w^*, \qquad x\in\X.
\end{align}
These equalities imply that the stationary policy $\phi$ is average-cost optimal and $w^\phi(x)$ does not depend on $x.$

Inequality (\ref{eqn:ACOI}) is known as the ACOI.
We remark that a weaker form of the ACOI with $\underline{w}$ substituted with $\bar{w}$ is also described in Feinberg et al.~\cite{FEINBERG2012}.  If Assumptions (\textbf{W*}) and (\textbf{B}) hold, let us define $w:=\underline{w};$ see \eqref{eqsfkz} for other equalities for $w.$

Recall the following definition.
\begin{definition}[{Hern\'{a}ndez-Lerma and Lasserre~\cite[Remark 5.5.2]{LERMA1996}}]
	A family $\mathcal{H}$ of real-valued functions on a metric space $X$
	is called equicontinuous at the point $x\in X$ if for each $\e > 0$
	there exists an open set $G$ containing $x$ such that
	\begin{equation*}
		| h(y) - h(x) | < \e \;\;\; \text {for all}\  y\in G \text{ and for all } h\in\mathcal{H}.
	\end{equation*}
	The family $\mathcal{H}$ is called equicontinuous (on $X$) if it is equicontinuous at
	all $x \in X.$
	\label{def:equicontinuous}
\end{definition}

The following theorem provides sufficient conditions under which there exist a stationary policy $\phi$ and a function
$\tu(\cdot)$ satisfying the ACOE.  This theorem is similar to Theorem 5.5.4 in Hern\'{a}ndez-Lerma and Lasserre~\cite{LERMA1996}, where MDPs with setwise continuous transition probabilities are considered.

\begin{theorem}
Let Assumptions (\textbf{W*}) and (\textbf{B}) hold. Consider a sequence $\{ \a_n\uparrow1 \}_{n=1,2,\ldots}$ of nonnegative discount factors.
If the sequence $\{u_{\a_n}\}_{n=1,2,\ldots}$ is equicontinuous and there exists a nonnegative measurable function $U(x),$ $x\in\X,$ such that
$U(x)\geq u_{\a_n}(x),$ $n=1,2,\ldots,$ and $\int_{\X} U(y)q(dy|x,a) < +\infty$ for all $x\in\X$ and $a\in \A,$
then the following statements hold.
\begin{enumerate}[(i)]
	\item There exists a subsequence $\{ \a_{n_k} \}_{k=1,2,\ldots}$ of $\{ \a_n \}_{n=1,2,\ldots}$ such that $\{u_{\a_{n_k}}(x)\}$ converges pointwise to $\tu(x),$ $x\in\X,$ where $\tu (x)$ is defined in \eqref{EQN1} for the sequence $\{ \a_{n_k}\}_{k=1,2,\ldots}.$ In addition, the function $\tu(x)$ is continuous. \label{stat:1}
	\item There exists a stationary  policy $\phi$ satisfying the ACOE with the nonnegative function $\tilde u$ defined for the subsequence  $\{ \a_{n_k} \}_{k=1,2,\ldots}$ mentioned in statement \eqref{stat:1}, that is, for all $x\in\X,$
     \begin{align}
  		w + \tu(x) = c(x,\phi(x)) + \int_{\X} \tu(y)q(dy|x,\phi(x)) =
  		\min_{a\in \A} [ c(x,a) + \int_{\X} \tu(y)q(dy|x,a) ],
  	\label{EQN11}
	\end{align}
and, since the left equation in \eqref{EQN11} implies inequality \eqref{eqn:ACOI}, every stationary policy satisfying \eqref{EQN11} is average-cost optimal.
\end{enumerate}
\label{thm:acoe}
\end{theorem}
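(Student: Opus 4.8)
The plan is to treat the two hypotheses as serving the two halves of the argument: equicontinuity drives the extraction of a pointwise-convergent subsequence in part~\eqref{stat:1}, and the dominating function $U$ supplies exactly the integrable bound needed to pass to the limit under the integral sign in part~(ii).

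For statement~\eqref{stat:1} I would run an Arzel\`a--Ascoli/diagonal argument. Since $\X$ is separable, I fix a countable dense set $D\subset\X$; for each $x\in D$ the sequence $\{u_{\a_n}(x)\}_n$ lies in $[0,U(x)]$, so a diagonal extraction produces a subsequence $\{\a_{n_k}\}$ along which $u_{\a_{n_k}}(x)$ converges for every $x\in D$. I would then upgrade to convergence on all of $\X$ using equicontinuity: given $x\in\X$ and $\e>0$, take the open $G\ni x$ from Definition~\ref{def:equicontinuous}, choose $x'\in D\cap G$, and a three-term estimate shows $\{u_{\a_{n_k}}(x)\}_k$ is Cauchy; call its limit $u^*(x)$. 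Letting $k\to\infty$ in the same equicontinuity bound gives $|u^*(y)-u^*(x)|\le\e$ for $y\in G$, so $u^*$ is continuous. Finally I would identify $u^*$ with the liminf $\tu$ of \eqref{EQN1} for $\{\a_{n_k}\}$: the constant sequence $y_k\equiv x$ yields $\tu(x)\le u^*(x)$, while for an arbitrary $y_k\to x$ equicontinuity gives $u_{\a_{n_k}}(y_k)\ge u_{\a_{n_k}}(x)-\e$ for large $k$, hence $\tu(x)\ge u^*(x)$; thus $\tu=u^*$ is continuous.

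For statement~(ii) I would start from the discounted-cost optimality equation, valid under Assumption~(\textbf{W*}) with the minimum attained by a measurable selector. Subtracting $m_\a$ and using $u_\a=v_\a-m_\a$ rewrites it in the normalized form
\begin{equation*}
(1-\a)m_\a+u_\a(x)=\min_{a\in\A}\Bigl[\,c(x,a)+\a\int_\X u_\a(y)\,q(dy|x,a)\Bigr],\qquad x\in\X.
\end{equation*}
Evaluating along $\a=\a_{n_k}$ and discarding the minimum at a fixed $a$, I get $(1-\a_{n_k})m_{\a_{n_k}}+u_{\a_{n_k}}(x)\le c(x,a)+\a_{n_k}\int_\X u_{\a_{n_k}}(y)\,q(dy|x,a)$, and I pass to the limit $k\to\infty$. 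The left side tends to $w+\tu(x)$ by \eqref{eqn:bar w = underline w} and part~\eqref{stat:1}; on the right, because $0\le u_{\a_{n_k}}\le U$, $u_{\a_{n_k}}\to\tu$ pointwise, and $\int_\X U(y)\,q(dy|x,a)<+\infty$, the dominated convergence theorem gives $\int_\X u_{\a_{n_k}}(y)\,q(dy|x,a)\to\int_\X\tu(y)\,q(dy|x,a)$, while $\a_{n_k}\to1$ clears the discount factor. This yields $w+\tu(x)\le c(x,a)+\int_\X\tu(y)\,q(dy|x,a)$ for every $a\in\A$, hence $w+\tu(x)\le\inf_{a\in\A}[\,c(x,a)+\int_\X\tu(y)\,q(dy|x,a)\,]$. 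Conversely, the ACOI \eqref{eqn:ACOI}, available from \cite[Corollary 2]{FEINBERG2012} for a stationary $\phi$, gives $w+\tu(x)\ge c(x,\phi(x))+\int_\X\tu(y)\,q(dy|x,\phi(x))\ge\inf_{a\in\A}[\,c(x,a)+\int_\X\tu(y)\,q(dy|x,a)\,]$. Chaining the two inequalities forces equality throughout, so the infimum is attained at $\phi(x)$ and \eqref{EQN11} holds; average-cost optimality of $\phi$ then follows as noted after \eqref{eqn:ACOI}.

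The step I expect to be the crux is the interchange of limit and integral in part~(ii): the integrands $u_{\a_{n_k}}$ converge only pointwise and $\X$ is noncompact, so weak continuity of $q$ by itself would not justify passing to the limit. The hypothesis on $U$ is precisely the uniform integrable domination that rescues this via dominated convergence, and in concrete models (such as the inventory problem of Section~\ref{sec:inventory control}) exhibiting such a $U$ is where the genuine effort lies. A secondary point needing care is the identification $u^*=\tu$ in part~\eqref{stat:1}, which is what guarantees the limiting relation is the ACOE for the function $\tu$ of \eqref{EQN1} rather than for some other limit.
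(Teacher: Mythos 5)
Your proposal is correct and follows essentially the same route as the paper: an Arzel\`a--Ascoli-type extraction (which you unpack as a diagonal argument over a countable dense set) plus an equicontinuity argument identifying the pointwise limit with the function $\tu$ of \eqref{EQN1}, and then, for part (ii), the ACOI from \cite[Corollary 2]{FEINBERG2012} combined with the normalized discounted-cost optimality equation, where the dominating function $U$ justifies the dominated-convergence passage to the limit. The only differences are cosmetic: the paper cites Arzel\`a--Ascoli directly and proves $\tu^*=\tu$ via a three-term estimate along a liminf-achieving sequence, while you use the two one-sided inequalities; both arguments are sound.
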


\begin{proof} (i)
Since $\{u_{\a_n}\}_{n=1,2,\ldots}$ is equicontinuous and Assumption (\textbf{B}) holds, then, according to the
Arzel\`{a}-Ascoli theorem, there exist a subsequence $\{ \a_{n_k} \}_{k=1,2,\ldots}$ of $\{\a_n\}_{n=1,2,\ldots}$ and a continuous function $\tu^*(\cdot)$ such that
\begin{align}
	\lim_{k\to +\infty} u_{\a_{n_k}}(x) = \tu^*(x) , \quad\quad x\in\X,
	\label{eqn:convergence:1}
\end{align}
and the convergence is uniform on each compact subset of $\X.$

Consider the function $\tu(x)$ defined in (\ref{EQN1}) for the sequence $\{\a_{n_k} \}_{k=1,2,\ldots}.$
In view of the definition of the function $\tu(x),$ there exist a subsequence $\{ \tilde{\a}_l \}_{l=1,2,\ldots}$ of
$\{ \a_{n_k} \}_{k=1,2,\ldots}$ and a sequence $\{y_l\}_{l=1,2,\ldots}\subset\X$ such that $\tilde{\a}_l\uparrow 1,$
$y_l\to x$ as $l\to +\infty$ and
\begin{align}
	\lim_{l\to\infty} u_{\tilde{\a}_l} (y_{l}) = \tu (x) .
	\label{eqn:limit-1}
\end{align}
In addition, the family $\{u_{\tilde{\a}_l}(x)\}_{l=1,2,\ldots}$ is also equicontinuous for all $x\in\X.$

For any fixed $\epsilon>0,$ (\ref{eqn:convergence:1}) implies that there exists a constant
$N_1>0$ such that for all $l\geq N_1$
\begin{align}
	|\tu^*(x) - u_{\tilde{\a}_l}(x)| < \e /3 .
	\label{eqn:ub part 1}
\end{align}
Since the family $\{u_{\tilde{\a}_l}\}_{l=1,2,\ldots}$ is equicontinuous, then there exist a constant $N_2>0$ and a neighborhood $B(x)$ of $x$ such that, for all $l\geq N_2$ and $y_{l}\in B(x),$
\begin{align}
	|u_{\tilde{\a}_l}(x) - u_{\tilde{\a}_l}(y_l)| < \e /3 .
	\label{eqn:ub part 2}
\end{align}
In view of \eqref{eqn:limit-1}, there exists $N_3>0$ such that for all $l\geq N_3,$
\begin{align}
	|u_{\tilde{\a}_l}(y_l) - \tu(x)| < \e /3 .
	\label{eqn:ub part 3}
\end{align}
Then \eqref{eqn:ub part 1}, \eqref{eqn:ub part 2}, and \eqref{eqn:ub part 3} imply that
for all $l\geq \max \{ N_1,N_2,N_3 \},$
\begin{align}
	\begin{split}
		& |\tu^*(x) - \tu(x)|
		\leq  |\tu^*(x) - u_{\tilde{\a}_l}(x)| +
		| u_{\tilde{\a}_l}(x) - u_{\tilde{\a}_l}(y_l)| +
		| u_{\tilde{\a}_l}(y_l) - \tu(x)| \\
		&< \e /3+\e /3+\e /3=\e .	
	\end{split}
	\label{eqn:ub}
\end{align}
Since $\e>0$ can be chosen arbitrarily, then \eqref{eqn:convergence:1} and \eqref{eqn:ub}
imply that
\begin{align}\label{eqn:convergence:2}
	\lim_{k\to +\infty} u_{\a_{n_k}}(x) = \tu^*(x) = \tu(x) , \quad\quad x\in\X.
\end{align}

(ii) Since Assumptions (\textbf{W*}) and (\textbf{B}) hold, then according to Feinberg et al.~\cite[Corollary 2]{FEINBERG2012}, there exists a stationary policy $\phi$ satisfying the ACOI with $\tu$ defined in \eqref{EQN1} for the sequence $\{ \a_{n_k} \}_{k=1,2,\ldots},$ that is
\begin{align}
	w + \tu(x) \geq c(x,\phi(x)) + \int_{\X} \tu(y)q(dy|x,\phi(x)) .
	\label{eqn:geq}
\end{align}

To prove the ACOE, it remains to prove the opposite inequality to  \eqref{eqn:geq}.
According to Feinberg et al.~\cite[Theorem 2(iv)]{FEINBERG2012}, the discounted-cost optimality equation is $v_{\a_{n_k}}(x) = \min_{a\in \A}
[ c(x,a) + \a \int_{\X} v_{\a_{n_k}} (y) q(y|x,a) ],$ $x\in\X,$
%
which, by subtracting $m_{\a}$ from both sides, implies that for all $a\in\A$
\begin{align}
	(1-\a_{n_k})m_{\a_{n_k}} + u_{\a_{n_k}} (x) \leq 
 c(x,a) + \a \int_{\X} u_{\a_{n_k}} (y) q(y|x,a),\qquad x\in\X  .
	\label{eqn:transform dcoe}
\end{align}
Let $k\to\infty.$ In view of \eqref{eqn:bar w = underline w}, \eqref{eqn:convergence:2}, and Lebesgue's dominated convergence theorem,   \eqref{eqn:transform dcoe} implies that for all $a\in\A$
\[
	w + \tu(x) \leq  c(x,a) + \int_{\X} \tu(y)q(dy|x,a), \qquad x\in\X,
\]
which implies
\begin{align}
	w + \tu(x) \leq \min_{a\in \A} [ c(x,a) + \int_{\X} \tu(y)q(dy|x,a)],\qquad x\in\X .
	\label{eqn:leq}
\end{align}
Since $\min_{a\in \A} [ c(x,a) + \int_{\X} \tu(y)q(dy|x,a)]\leq c(x,\phi(x)) + \int_{\X} \tu (y) q(y|x,\phi(x)),$ then \eqref{eqn:geq} and \eqref{eqn:leq} imply
\eqref{EQN11}.
\end{proof}

\section{Inventory control problem}
\label{sec:inventory control}

Let $\R$ denote the real line, $\Z$ denote the set of all integers, $\Rp:=[0,+\infty)$ and
$\N_0 = \{0,1,2,\ldots \}.$ Consider the classic stochastic
periodic-review inventory control problem with fixed ordering cost and general demand.
At times $t=0,1,\ldots,$ a decision-maker views the current inventory of a single commodity
and makes an ordering decision. Assuming zero lead times, the products are immediately
available to meet demand. Demand is then realized, the decision-maker views the remaining
inventory, and the process continues. The unmet demand is backlogged and the cost of
inventory held or backlogged (negative inventory) is modeled as a convex function. The
demand and the order quantity are assumed to be non-negative. The state and action spaces
are either (i) $\X = \R$ and $\A = \Rp,$ or (ii) $\X=\Z$ and $\A = \N_0.$
The inventory control problem is defined by the following parameters.
\begin{enumerate}
    \item $K\geq 0$ is a fixed ordering cost;
    \item $\bar{c}>0$ is the per unit ordering cost;
    \item $h(\cdot)$ is the holding/backordering cost per period, which is assumed to be
    a convex function on $\X$ with real values
    and $h(x)\to\infty$ as $|x|\to\infty;$
    \item $\{D_t,t=1,2,\dots\}$ is a sequence of i.i.d. nonnegative finite
    random variables representing the demand at periods $0,1,\dots\ .$
    We assume that $\mathbb{E} [h(x-D)] < \infty$ for all $x\in\X$ and $\PR(D>0)>0,$
    where $D$ is a random variable with the same distribution as $D_1;$ \label{enum:demand}
    \item $\a \in [0,1)$ is the discount factor.
\end{enumerate}
Note that $\E [D] < \infty$ since, in view of Jensen's inequality,
 $h(x-\E [D])\le\E [h(x-D)]<\infty.$ Without loss of generality,  assume that $h$ is nonnegative and $h(0) = 0.$ The assumption $\PR(D>0)>0$ avoids the
trivial case when there is no demand. If $\PR(D=0)=1,$ then the optimality inequality does not hold because $w(x)$ depends on $x;$ see Feinberg and Lewis~\cite{FEINBERG2015} for details.

The dynamic of the system is defined by the equation
\begin{displaymath}
	x_{t+1} = x_t + a_t - D_{t+1}, \quad t=0,1,2,\dots ,
\end{displaymath}
where $x_t$ and $a_t$ denote the current inventory level and the ordered amount at period $t,$
respectively. Then the one-step cost is
\begin{align}
	c(x,a) = KI_{\{ a > 0 \}} + {\bar c}a + \E[h(x+a-D)],
	\quad (x,a)\in \X\times\A ,
	\label{inventory-control:cost function}
\end{align}
where $I_{\{a>0\}} $ is an indicator of the event $\{a>0\},$

According to Feinberg and Lewis~\cite[Corollary 6.1, Proposition 6.3]{FEINBERG2015}, Assumptions (\textbf{W*}) and (\textbf{B}) hold for the MDP corresponding to the described inventory control problem. This implies that the optimality equation for the total discounted  costs can be written as
\begin{align}
	\va (x) & = \min \{ \min_{a\geq 0}[ K + G_{\alpha}(x+a)], G_{\alpha}(x) \} - \bar{c}x, ,\quad x\in\X , \label{eqn:va}
\end{align}
where
\begin{equation}
G_{\alpha}(x)  := {\bar c} x + \E[h(x-D)] + \alpha \E [v_{\alpha}(x-D)],\label{GNG222}\qquad x\in\X.
\end{equation}
According to Feinberg and Liang~\cite[Theorem 5.3]{FEINBERG2016}, the value function $\va(x)$ is continuous for all $\a\in[0,1).$
The function $G_{\alpha}(x)$ is real-valued (Feinberg and Lewis~\cite[Corollary 6.4]{FEINBERG2015}) and continuous (Feinberg and Liang~\cite[Theorem 5.3]{FEINBERG2016}).

The function $c:\X\times\A\to \R$ is inf-compact; see Feinberg and Lewis~\cite[Corollary 6.1]{FEINBERG2015}. This property and the validity of Assumption (\textbf{W*}) imply that for each $\a\in [0,1)$ the function $v_\alpha$ is inf-compact (Feinberg and Lewis~\cite[Proposition 3.1(iv)]{FEINBERG2007}) and therefore the set $X_\alpha:=\{x\in\X|\, v_\alpha(x)=m_\alpha\},$ where $m_\a$ is defined in \eqref{defmauaw}, is nonempty and compact. The validity of Assumptions (\textbf{W*}) and (\textbf{B}(i)) and the inf-compactness of $c$ imply that there is a compact subset $\cal K$ of $\X$ such that $\X_\alpha\subseteq {\cal K}$ for all $\a\in [0,1);$ Feinberg et. al.~\cite[Theorem 6]{FEINBERG2012}.  Following  Feinberg and Lewis~\cite{FEINBERG2015}, let us consider a bounded interval $[x^*_L,x^*_U]\subseteq \X$ such that
\begin{equation}\label{eq:boundmaxlul}
X_\a\subseteq [x^*_L,x^*_U]\qquad {\rm for\ all\ } \alpha\in [0,1).
\end{equation}

Recall the definitions of $K$-convex functions and $(s,S)$ policies.
\begin{definition}\label{def:k-convex}
	A function $f:\X\to\R$ is called $K$-convex where $K\geq 0,$ if for each $x\leq y$
	and for each $\lambda\in(0,1),$
	\begin{displaymath}
		f((1 - \lambda)x+\lambda y) \leq (1 - \lambda)f(x)+\lambda f(y)+\lambda K	.
	\end{displaymath}
\end{definition}
Suppose $f(x)$ is a continuous $K$-convex function such that
$f(x)\to \infty$ as $|x|\to\infty.$ Let
\begin{align}
	S &\in \underset{x\in\X}{\argmin} \{ f (x) \} \label{eqn:def S}, \\
	s &= \inf \{ x\leq S: f (x) \leq K + f(S) \}. \label{eqn:def s}
\end{align}

\begin{definition}\label{def:sS policy}
	Let $s_t$ and $S_t$ be real numbers such that $s_t\leq S_t,$ $t=0,1,\ldots\ .$
	A policy is called
	an $(s_t,S_t)$ policy at step $t$ if it orders up to the level $S_t,$ if $x_t<s_t,$
	and does not order, if $x_t\geq s_t.$ A Markov policy is called an $(s_t,S_t)$ policy
	if it is an $(s_t,S_t)$ policy at all steps $t=0,1,\ldots\ .$ A policy is called an
	$(s,S)$ policy if it is stationary and it is an $(s,S)$ policy at all steps
	$t=0,1,\ldots\ .$
\end{definition}

Define
\begin{equation}\label{alphastar}
\alpha^*:=1+\lim_{x\to-\infty} \frac{h(x)}{\bar{c} x},
\end{equation}
where the limit exists and  $\alpha^*<1$ since the function $h$ is convex,  see Feinberg and Liang~\cite{FEINBERG2016}.  

\begin{theorem}[{Feinberg and Liang~\cite[Theorem 4.4(i) and Corollary 5.4]{FEINBERG2016}}]\label{thm:fli}
If $\alpha\in (\alpha^*,1)$ is a nonnegative discount factor, then an $(s_\alpha,S_\alpha)$ policy is optimal for the discount factor $\alpha,$ where the real numbers $S_\alpha$ and $s_\alpha$ satisfy \eqref{eqn:def S} and are defined in   \eqref{eqn:def s} respectively with $f(x)=G_\alpha(x),$ $x\in\X.$ The stationary policy $\varphi$ coinciding with the $(s_\alpha,S_\alpha)$ policy at all $x\in\X,$ except $x=s_\a,$ where   $\varphi(s_\a)=S_\alpha-s_\alpha,$ is also optimal for the discount factor $\a.$

\end{theorem}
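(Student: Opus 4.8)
The plan is to exploit the structural form of the discounted-cost optimality equation \eqref{eqn:va} together with the $K$-convexity of the auxiliary function $\Ga$ from \eqref{GNG222}. The classical argument, going back to Scarf, shows that once $\Ga$ is known to be $K$-convex (in the sense of Definition~\ref{def:k-convex}), continuous, and inf-compact, the minimization over the order-up-to level in \eqref{eqn:va} is solved by a threshold rule, namely an $(\sa,\Sa)$ policy in the sense of Definition~\ref{def:sS policy}. Accordingly, the proof splits into two parts: first, proving $K$-convexity of $\Ga$; second, deducing the threshold structure of the optimal ordering decision from that $K$-convexity.

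For the first part I would establish $K$-convexity of $\Ga$ by value iteration. Set $v^{(0)}_\a\equiv 0$ and define $v^{(n+1)}_\a$ through the right-hand side of \eqref{eqn:va} with $\va$ replaced by $v^{(n)}_\a$, writing $G^{(n)}_\a(x):=\bar{c}x+\E[h(x-D)]+\a\E[v^{(n)}_\a(x-D)]$. The function $G^{(0)}_\a$ is convex and hence $K$-convex. The inductive step rests on three preservation properties: (a) a convex function is $K$-convex for every $K\geq 0$, and adding an affine function preserves $K$-convexity; (b) if $f$ is $K$-convex then $\E[f(x-D)]$ is $K$-convex, since translation and the nonnegative averaging over $D$ preserve the defining inequality, while multiplication by $\a\in[0,1)$ turns a $K$-convex function into an $\a K$-convex, hence $K$-convex, one; and (c) the key Scarf lemma, that if $f$ is $K$-convex then $x\mapsto\min\{\min_{a\geq0}[K+f(x+a)],f(x)\}$ is again $K$-convex. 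Properties (a)--(c) give that $v^{(n+1)}_\a$, and therefore $G^{(n+1)}_\a$, is $K$-convex, so $G^{(n)}_\a$ is $K$-convex for all $n$. Since the one-step costs are nonnegative, $v^{(n)}_\a\uparrow\va$ monotonically, whence $G^{(n)}_\a(x)\to\Ga(x)$ pointwise by monotone convergence, and the pointwise limit of $K$-convex functions is $K$-convex; combined with the continuity of $\va$ and $\Ga$ quoted from \cite{FEINBERG2016}, this yields the $K$-convexity of $\Ga$. The condition $\a\in(\a^*,1)$, with $\a^*$ as in \eqref{alphastar}, enters precisely here: it guarantees that $\Ga(x)\to+\infty$ as $x\to-\infty$ as well as $x\to+\infty$, so that $\Ga$ is inf-compact and attains its minimum at a finite point $\Sa$.

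For the second part I would invoke the standard consequence of $K$-convexity. With $\Sa$ a global minimizer of $\Ga$ and $\sa$ defined by \eqref{eqn:def s}, continuity and inf-compactness of $\Ga$ give $\Ga(\sa)=K+\Ga(\Sa)$. $K$-convexity then yields two facts: for $x\geq\sa$ one has $\Ga(x)\leq K+\Ga(y)$ for every $y>x$, so not ordering attains the inner minimum in \eqref{eqn:va}; and for $x<\sa$ one has $\Ga(x)\geq K+\Ga(\Sa)=K+\min_{y}\Ga(y)$, so ordering up to $\Sa$ is optimal. Hence the $(\sa,\Sa)$ policy attains the minimum in \eqref{eqn:va} for every $x\in\X$, which proves its optimality for the discount factor $\a$. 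At the single point $x=\sa$ the equality $\Ga(\sa)=K+\Ga(\Sa)$ shows that both not ordering and ordering up to $\Sa$ attain the minimum, so the stationary policy $\varphi$ that orders $\Sa-\sa$ at $\sa$ and otherwise follows the $(\sa,\Sa)$ rule is optimal as well.

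I expect the main obstacle to be the first part, specifically the Scarf lemma (c), whose proof requires a careful case analysis of the relative positions of the points and of the order-up-to level, and the verification that $\a>\a^*$ secures the inf-compactness of $\Ga$ needed to make $\Sa$ and $\sa$ finite and the $(\sa,\Sa)$ policy well defined. By contrast, the passage to the limit is routine once monotone convergence $v^{(n)}_\a\uparrow\va$ is established, since the $K$-convexity inequality is stable under pointwise limits, and the second part is then a direct application of the threshold argument.
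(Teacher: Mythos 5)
The first thing to note is that the paper does not prove Theorem~\ref{thm:fli} at all: the statement is imported, together with its proof, from Feinberg and Liang~\cite[Theorem~4.4(i) and Corollary~5.4]{FEINBERG2016}, so your attempt can only be compared with that reference. Your overall skeleton does match its strategy (establish $K$-convexity and inf-compactness of $\Ga$, then run the Scarf threshold argument), and your second part is correct and standard: given that $\Ga$ is $K$-convex, continuous and inf-compact, the properties $\Ga(x)>K+\Ga(\Sa)$ for $x<\sa$, $\Ga(x)\le K+\Ga(y)$ for $\sa\le x\le y$, and the tie $\Ga(\sa)=K+\Ga(\Sa)$ do deliver both the $(\sa,\Sa)$ policy and the modified policy $\varphi$.

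The genuine gap is in your first part, at precisely the step you treat as the engine of the induction. The preservation lemma (c) is not true in the unconditional form you state; its classical proof requires $f$ to be continuous \emph{and inf-compact}, because it needs a global minimizer $S$ and the threshold $s$ of \eqref{eqn:def s} to carry out the case analysis. Now observe when the hypothesis $\a>\a^*$ has any content: exactly when $\a^*>0$ in \eqref{alphastar}, i.e.\ when the asymptotic slope of $h$ at $-\infty$ is smaller in absolute value than $\bar{c}$. In that case the iterates you want to feed into the lemma are \emph{not} inf-compact: already $G^{(0)}_\a(x)=\bar{c}x+\E[h(x-D)]$ satisfies $G^{(0)}_\a(x)/(\bar{c}x)\to\a^*>0$, hence $G^{(0)}_\a(x)\to-\infty$ as $x\to-\infty$; a short computation shows more generally that, while the iterates remain in this regime, the asymptotic slope of $G^{(n)}_\a$ at $-\infty$ equals $\bar{c}\,[1+(\a^*-1)(1-\a^{n+1})/(1-\a)]$, which is positive for small $n$ and turns negative only for large $n$ --- and only because $\a>\a^*$. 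So the early iterates have no global minimizer, the corresponding finite-horizon optimal policies are not $(s,S)$ policies, and the classical Scarf lemma cannot be applied at the first steps of your induction: as written, it fails immediately. The repair is not cosmetic: one must either notice that a non-inf-compact iterate in this scheme is convex and tends to $-\infty$ at $-\infty$, hence is nondecreasing, so that ordering is never optimal at that horizon and the Bellman operator acts as the identity on it (preserving convexity), with inf-compactness setting in after finitely many iterations precisely because $\a>\a^*$, after which the classical lemma takes over; or one must prove a preservation theorem valid for $K$-convex functions that are not inf-compact. Developing exactly this machinery is the technical content of the cited Feinberg--Liang paper, and it is why the theorem carries the restriction $\a\in(\a^*,1)$ rather than being a routine value-iteration exercise. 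For the same reason, your claim that $\a>\a^*$ ``guarantees $\Ga(x)\to+\infty$ as $x\to-\infty$'' is asserted where it needs to be proved: it requires this slope analysis of $\va$ (or of the iterates) at $-\infty$, whereas the direction $x\to+\infty$ is trivial ($\Ga(x)\ge\bar{c}x$) and uses no hypothesis on $\a$ at all.
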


As shown in Feinberg and Lewis~\cite[Equations (6.20), (6.23)]{FEINBERG2015}, the optimality inequality can be written as
\begin{align}
	w + \tu (x) \geq \min \{ \min_{a\geq 0}[ K + H(x+a)], H(x) \} - \bar{c}x,
	\label{eqn:IC ACOE-1}
\end{align}
where
\begin{align}
	H(x) := \bar{c}x +\E[h(x-D)] + \E[\tu (x-D)].
	\label{eqn:IC ACOE-2}
\end{align}
The following statement is  Theorem 6.10(iii) from Feinberg and Lewis~\cite{FEINBERG2015} with the value of $\alpha^*$ is provided in \eqref{alphastar}; see Theorem~\ref{thm:fli}.
\begin{theorem}
\label{thm:fle}
For each nonnegative $\alpha\in
          (\alpha^*,1)$, consider an optimal $(s^\prime_\alpha, S^\prime_\alpha)$
          policy for the discounted-cost criterion with the discount factor $\alpha.$
          Let $\{\alpha_n\uparrow 1\}_{n=1,2,\ldots}$ be a sequence of negative numbers with $\alpha_1> \alpha^*.$  Every sequence
          $\{(s^\prime_{\alpha_n}, S^\prime_{\alpha_n})\}_{n=1,2,\ldots}$ is
          bounded, and  each its limit point $(s^*,S^*)$ defines an
          average-cost optimal $(s^*,S^*)$ policy. Furthermore, this policy satisfies
          the optimality inequality \eqref{eqn:IC ACOE-1},
          where the function $\tilde u$ is defined in \eqref{EQN1} for an arbitrary subsequence
         $\{\alpha_{n_k}\}_{k=1,2,\ldots}$ of $\{\alpha_{n}\}_{n=1,2,\ldots}$ satisfying $(s^*,S^*)=\lim_{k\to\infty} (s^\prime_{\alpha_{n_k}}, S^\prime_{\alpha_{n_k}}).$
\end{theorem}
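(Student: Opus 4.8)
The plan is to reduce the statement to the general average-cost theory of Feinberg et al.~\cite[Corollary 2]{FEINBERG2012} after establishing uniform boundedness of the discount-optimal thresholds. First I would prove boundedness. By Theorem~\ref{thm:fli}, for nonnegative $\a\in(\a^*,1)$ any optimal $(s'_\a,S'_\a)$ policy must have $S'_\a\in\argmin_{x}\Ga(x)$ and $s'_\a$ in the indifference set $\{x\le S'_\a:\Ga(x)\le K+\Ga(S'_\a)\}$, so it suffices to bound the minimizers and this sublevel set of $\Ga$ uniformly over $\a\in[\a_1,1)$. Subtracting the constant $\a m_\a$ does not change minimizers, and by \eqref{GNG222} one has $\Ga(x)-\a m_\a=\bar{c}x+\E[h(x-D)]+\a\E[\ua(x-D)]\ge\bar{c}x+\E[h(x-D)]$; coercivity of $h$ makes the latter tend to $+\infty$ as $x\to+\infty$, giving a uniform upper bound on $S'_\a$. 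The uniform lower bound is exactly where $\a\ge\a_1>\a^*$ is needed: by the definition \eqref{alphastar} of $\a^*,$ the family $\{\Ga\}_{\a\ge\a_1}$ is coercive as $x\to-\infty$ uniformly in $\a$ (Feinberg and Liang~\cite{FEINBERG2016}). Hence the sets $\{x:\Ga(x)\le K+\min_y\Ga(y)\}$ lie in one fixed compact interval, which bounds both $S'_\a$ and $s'_\a$; Bolzano--Weierstrass then yields limit points.

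Next I would fix a limit point $(s^*,S^*),$ choose a subsequence $\{\a_{n_k}\}$ with $(s'_{\a_{n_k}},S'_{\a_{n_k}})\to(s^*,S^*),$ and let $\tu$ be the nonnegative function defined by \eqref{EQN1} for this subsequence. Since Assumptions (\textbf{W*}) and (\textbf{B}) hold for the inventory MDP, Feinberg et al.~\cite[Corollary 2]{FEINBERG2012} provides a stationary policy $\phi$ satisfying the ACOI \eqref{eqn:ACOI} with this $\tu$ together with the equalities \eqref{eqsfkz}, where $w=\underline{w}.$ To put \eqref{eqn:ACOI} into the form \eqref{eqn:IC ACOE-1} I would substitute the inventory cost \eqref{inventory-control:cost function} and the transition $y=x+a-D$: with $H$ as in \eqref{eqn:IC ACOE-2}, a direct computation gives $c(x,a)+\int_{\X}\tu(y)q(dy|x,a)=KI_{\{a>0\}}+H(x+a)-\bar{c}x,$ so that $\min_{a\ge0}[c(x,a)+\int_{\X}\tu(y)q(dy|x,a)]=\min\{\min_{a\ge0}[K+H(x+a)],H(x)\}-\bar{c}x.$ Since the right-hand side of \eqref{eqn:ACOI} dominates this minimum over actions, inequality \eqref{eqn:IC ACOE-1} follows; finiteness of $H$ comes from the integrability of $\tu$ established for the inventory model in Feinberg and Lewis~\cite{FEINBERG2015}.

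It then remains to show that the $(s^*,S^*)$ policy itself is average-cost optimal. The policy $\phi$ furnished by \cite[Corollary 2]{FEINBERG2012} is obtained as a measurable limit point of $\a_{n_k}$-discount-optimal selectors, and by Theorem~\ref{thm:fli} these selectors are the $(s'_{\a_{n_k}},S'_{\a_{n_k}})$ policies. Because $(s'_{\a_{n_k}},S'_{\a_{n_k}})\to(s^*,S^*)$ along the full sequence in $k,$ the same convergence holds along any further subsequence used to build $\phi;$ hence for every $x\ne s^*$ the prescribed order quantity converges to the quantity ordered by the $(s^*,S^*)$ policy, so $\phi$ coincides with the $(s^*,S^*)$ policy off the single state $x=s^*.$ Since $\PR(D>0)>0,$ the average cost of a stationary policy is unaffected by its decision at a single state, so the $(s^*,S^*)$ policy also satisfies \eqref{eqn:ACOI}, and by \eqref{eqsfkz} it is average-cost optimal and satisfies \eqref{eqn:IC ACOE-1}.

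The hard part will be this last identification, namely confirming that the limiting thresholds $(s^*,S^*)$ genuinely encode an optimal policy. Without the equicontinuity used in Theorem~\ref{thm:acoe}, only the Fatou lower bound $\liminf_{k}(\Ga(x)-\a_{n_k}m_{\a_{n_k}})\ge H(x)$ is immediately available, so one cannot conclude from convergence of functions alone that $S^*\in\argmin H$ and that $s^*$ is the matching reorder level of $H.$ To close this gap I would use that the discounted value functions, and hence the functions $\Ga,$ are $K$-convex and continuous, that these properties are inherited by the limit $H,$ and that $\tu$ is continuous for the inventory model (Feinberg and Lewis~\cite{FEINBERG2015}); continuity and $K$-convexity of $H$ are precisely what allow the minimizer and the reorder point to pass to the limit and guarantee that the $(s^*,S^*)$ policy attains the minimum in \eqref{eqn:IC ACOE-1}.
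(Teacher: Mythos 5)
A preliminary remark: the paper contains no proof of this statement at all --- it is imported verbatim as Theorem 6.10(iii) (together with the boundedness claim, Theorem 6.10(i)) of Feinberg and Lewis~\cite{FEINBERG2015} --- so your attempt must be measured against that external argument, whose general skeleton (uniform boundedness of the thresholds, then passage to limit points of discount-optimal actions) you do follow; your reduction of the ACOI to the inventory form \eqref{eqn:IC ACOE-1} via $c(x,a)+\int_{\X}\tu(y)q(dy|x,a)=KI_{\{a>0\}}+H(x+a)-\bar{c}x$ is also fine. The decisive gap is in your identification step. Corollary 2 of \cite{FEINBERG2012} asserts only that \emph{some} stationary policy $\phi$ satisfies the ACOI \eqref{eqn:ACOI}; it gives you no control over which discount-optimal selectors appear in its construction, so you may not assert that this $\phi$ is a pointwise limit of the $(s^\prime_{\alpha_{n_k}}, S^\prime_{\alpha_{n_k}})$ selectors. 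What you actually need is the much stronger statement that \emph{every} stationary policy whose actions are limit points of discount-optimal actions satisfies the ACOI; that is precisely the ``convergence of optimal actions'' machinery of Feinberg and Lewis, i.e., the substance of the very theorem being proved, so invoking it is circular, while invoking only Corollary 2 leaves the step unproven. Moreover, even granting that machinery, the state $x=s^*$ is exceptional: if $s^\prime_{\alpha_{n_k}}>s^*$ infinitely often, the limiting action at $s^*$ is $S^*-s^*\neq 0$, whereas the $(s^*,S^*)$ policy prescribes $0$ there, so the limit-point argument by itself delivers the modified policy of Corollary~\ref{thm:ordering at sa}, not the $(s^*,S^*)$ policy.

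Your proposed repairs do not close this gap. The claim that ``the average cost of a stationary policy is unaffected by its decision at a single state'' is false in general and, in particular, in the discrete case $\X=\Z$, $\A=\N_0$ covered by the theorem: under the $(s^*,S^*)$ policy the state $s^*$ can be visited with positive long-run frequency (from $S^*$ the cumulative demand can hit $s^*$ exactly, recurrently), and changing the action there changes both the one-step cost and the transition law. Indifference of the two actions at $s^*$ amounts to $K+H(S^*)=H(s^*)$, which this paper obtains only \emph{after} the ACOE and the continuity of $H$ are established (Theorem~\ref{thm:inventory:acoe} and Corollary~\ref{thm:ordering at sa}); neither is available when proving the present statement. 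The same circularity affects your final paragraph: $\tu$ defined by \eqref{EQN1} is only lower semicontinuous a priori, and its continuity for the inventory model is \emph{not} proved in \cite{FEINBERG2015} --- it is the contribution of Theorem~\ref{thm:inventory:acoe} of this paper, resting on the equicontinuity Lemma~\ref{lm:u a equicont} and on Theorem~\ref{thm:acoe}. Finally, your boundedness argument is only half complete: the upper bound on $S^\prime_\alpha$ also requires $\min_x(\Ga(x)-\alpha m_\alpha)$ to be bounded uniformly in $\alpha$ (available via the dominating function $U$ of Lemma~\ref{lm:U(x) bounded}), and the lower bound --- the only place where $\alpha>\alpha^*$ enters --- is simply delegated to a citation of ``uniform coercivity,'' which is itself part of the quoted Feinberg--Lewis result rather than something you establish.
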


The following theorem states that the conditions and conclusions described in Theorem \ref{thm:acoe}  hold for the described inventory control problem.  It also states some problem-specific results.

\begin{theorem}\label{thm:inventory:acoe}
The MDP for the described inventory control problem satisfies the sufficient conditions stated
in Theorem \ref{thm:acoe}. Therefore, the conclusions of Theorem \ref{thm:acoe} hold for any sequence $\{\alpha_n\uparrow 1\}_{n=1,2,\ldots}$ of nonnegative discount factors with $\alpha_1>\alpha^*,$ that is, there exists a stationary policy $\varphi$ such that for all $x\in\X$
\begin{align}\label{eqn:IC ACOE}
  w + \tu(x)  = K I_{\{\varphi(x)>0\}} + H(x+\varphi(x)) - \bar{c}x
  =\min \{ \min_{a\geq 0}[ K + H(x+a)], H(x) \} - \bar{c}x,
\end{align}
where the function $H$ is defined in \eqref{eqn:IC ACOE-2}. In addition, the functions $\tilde u$ and $H$ are $K$-convex, continuous and inf-compact, and a stationary optimal policy $\varphi$ satisfying \eqref{eqn:IC ACOE} can be selected as an $(s^*,S^*)$ policy described in Theorem~\ref{thm:fle}.  It also can be selected  as an $(s,S)$ policy with the real numbers $S$ and $s$ satisfying \eqref{eqn:def S} and  defined in   \eqref{eqn:def s} respectively for $f(x)=H(x),$ $x\in\X.$
\end{theorem}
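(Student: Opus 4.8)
The plan is to apply Theorem~\ref{thm:acoe}. Assumptions (\textbf{W*}) and (\textbf{B}) hold by Feinberg and Lewis, so the two remaining hypotheses to verify are the equicontinuity of $\{u_{\a_n}\}$ and the existence of a nonnegative envelope $U$ with $\int_\X U(y)q(dy|x,a)<\infty$; once these hold, Theorem~\ref{thm:acoe} supplies a continuous $\tu$ (along a subsequence) and a stationary $\varphi$ satisfying the abstract ACOE \eqref{EQN11}, which I would translate into \eqref{eqn:IC ACOE}, and then establish the $K$-convexity, continuity and inf-compactness of $\tu$ and $H$ and extract the $(s,S)$ policy. Throughout I would exploit the explicit $(s_\a,S_\a)$ structure of Theorem~\ref{thm:fli}: the function $\va+\bar{c}x$ equals the constant $K+\Ga(\Sa)$ on $(-\infty,s_\a]$ and equals $\Ga$ on $[s_\a,\infty)$, so $\ua(x)=\ua(s_\a)+\bar{c}(s_\a-x)$ is affine with slope $-\bar{c}$ left of $s_\a$, while $\ua(x)=\E[h(x-D)]+\a\E[\ua(x-D)]-(1-\a)m_\a$ on the no-order region. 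Since $\va$ is $K$-convex, each $\ua=\va-m_\a$ is $K$-convex; together with Assumption~\textbf{B}(ii) this gives the uniform-on-compacta bound $\ua(x)\le\sup_n u_{\a_n}(a)+\sup_n u_{\a_n}(b)+K$ for $x\in[a,b]$, and the inclusion $X_\a\subseteq[x^*_L,x^*_U]$ of \eqref{eq:boundmaxlul} confines the kink $s_\a$ to a fixed compact set.

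I expect the \emph{equicontinuity} of $\{u_{\a_n}\}$ to be the main obstacle, because $K$-convexity alone permits downward dips of size up to $K$ on arbitrarily small scales and so does not by itself force a uniform modulus of continuity. The plan is to control the increments $g_\a(x):=\ua(x+\delta)-\ua(x)$ region by region: left of $s_\a$ they equal $-\bar{c}\delta$ exactly, and on the no-order region I would iterate the recursion $g_\a(x)=\E[h(x+\delta-D)-h(x-D)]+\a\E[g_\a(x-D)]$, using the convexity (hence local Lipschitz property) of $x\mapsto\E[h(x-D)]$, the uniform bound on $(1-\a)m_\a$ from \eqref{eqn:bar w = underline w}, and the fact that the walk $x-\sum_i D_i$ crosses $s_\a$ in finite expected time (as $\E[D]<\infty$ and $\PR(D>0)>0$), so the recursion terminates in the affine region where $g_\a$ is pinned to $-\bar{c}\delta$. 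For the envelope $U$ I would bootstrap the same first-passage estimate: starting from the no-order representation and stopping at the first time the inventory falls below $x^*_U$ yields $u_{\a_n}(x)\le C+\bar{c}|x|+\E\big[\sum_{t<\tau}h(x-\sum_{i\le t}D_i)\big]$ with $C$ and $\tau$ independent of $n$; taking $U$ equal to the right-hand side gives a nonnegative measurable bound whose integrability $\int_\X U(y)q(dy|x,a)=\E[U(x+a-D)]<\infty$ follows from $\E[h(x-D)]<\infty$ and $\E[D]<\infty$.

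With both hypotheses verified, Theorem~\ref{thm:acoe} gives a subsequence $\{\a_{n_k}\}$ along which $u_{\a_{n_k}}\to\tu$ pointwise with $\tu$ continuous, and a stationary $\varphi$ satisfying \eqref{EQN11}. Substituting $c(x,a)=KI_{\{a>0\}}+\bar{c}a+\E[h(x+a-D)]$ together with $\int_\X\tu(y)q(dy|x,a)=\E[\tu(x+a-D)]$ and the identity $\E[h(x+a-D)]+\E[\tu(x+a-D)]=H(x+a)-\bar{c}(x+a)$ turns the right-hand side of \eqref{EQN11} into $\min\{\min_{a\ge0}[K+H(x+a)],H(x)\}-\bar{c}x$, which is \eqref{eqn:IC ACOE}; this is routine algebra once $H$ is real-valued, which follows from $\tu\le U$. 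For regularity, $\tu$ is $K$-convex as a pointwise limit of the $K$-convex $u_{\a_{n_k}}$, and then $H=\bar{c}x+\E[h(x-D)]+\E[\tu(x-D)]$ is $K$-convex as a sum of a linear term, the convex $\E[h(x-D)]$, and the $K$-convex $\E[\tu(x-D)]$; continuity of $H$ follows from continuity of $\tu$ and dominated convergence via $U$ and $\E[h(x-D)]<\infty$. Inf-compactness follows from growth: the affine left tail forces $\tu(x)=\gamma-\bar{c}x$ on a left half-line, so $\bar{c}x+\E[\tu(x-D)]$ stays bounded and $H(x)\to\infty$ as $x\to-\infty$ because $\E[h(x-D)]\to\infty$, while as $x\to+\infty$ both $\bar{c}x$ and $\E[h(x-D)]\to\infty$, and \eqref{eqn:IC ACOE} then gives $\tu(x)\ge H(x)-\bar{c}x-w\to\infty$ as well.

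It remains to identify the optimal $(s,S)$ policy. Since $H$ is continuous, $K$-convex and inf-compact, I would define $S$ and $s$ by \eqref{eqn:def S}--\eqref{eqn:def s} with $f=H$; the $K$-convex minimization structure then shows that ordering up to $S$ when $x<s$ and not ordering otherwise attains the minimum in \eqref{eqn:IC ACOE}, so this $(s,S)$ policy satisfies the ACOE and is average-cost optimal, and at $x=s$ the equality $H(s)=K+H(S)$ makes both ``do not order'' and ``order up to $S$'' optimal. To match Theorem~\ref{thm:fle}, I would observe that $\Ga-\a m_\a\to H$ uniformly on compacta along $\{\a_{n_k}\}$ (from $u_{\a_{n_k}}\to\tu$ and dominated convergence), and since adding a constant changes neither the argmin in \eqref{eqn:def S} nor the threshold in \eqref{eqn:def s}, the inf-compactness of the limit yields $S'_{\a_{n_k}}\to S$ and $s'_{\a_{n_k}}\to s$; hence the limit point $(s^*,S^*)$ of Theorem~\ref{thm:fle} coincides with $(s,S)$ and defines the same average-cost optimal policy.
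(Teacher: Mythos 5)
Your overall architecture matches the paper's: verify an integrable envelope $U$ and the equicontinuity of $\{u_{\a_n}\}$, invoke Theorem~\ref{thm:acoe}, then obtain $K$-convexity, continuity and inf-compactness of $\tu$ and $H$ and read off an $(s,S)$ policy. However, there is a genuine gap at the core of your equicontinuity argument. You iterate $g_\a(x):=u_\a(x+\delta)-u_\a(x)$ through the no-order recursion and claim the iteration ``terminates in the affine region where $g_\a$ is pinned to $-\bar{c}\delta$.'' That is not what happens: the iteration stops at the first time the demand walk started at $x$ lands at a point $y\leq s_\a$, but the shifted point $y+\delta$ can lie in $(s_\a,s_\a+\delta)$, where $u_\a$ is \emph{not} affine. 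So the terminal increment is not $-\bar{c}\delta$; it contains the term $u_\a(y+\delta)-u_\a(s_\a)$ with $y+\delta$ just above the kink, and $K$-convexity alone cannot control it (as you yourself note, $K$-convex functions admit dips of size up to $K$ at arbitrarily small scales). Controlling this straddle term uniformly in $n$ is exactly the technical heart of the paper's Lemma~\ref{lm:u a equicont}, namely inequality \eqref{eqn:ubd ux uan}: one conditions the optimality equation at $x\in(s_{\a_n},s_{\a_n}+\delta]$ on the events $\{D\geq x-s_{\a_n}\}$, $\{0<D<x-s_{\a_n}\}$, $\{D=0\}$, solves for $v_{\a_n}(x)$ using the factor $[1-\a_n\PR(D=0)]^{-1}\leq(1-\PR(D=0))^{-1}$ (here $\PR(D>0)>0$ is essential), and bounds the conditional middle term by $2\,\PR(0<D<x-s_{\a_n})\,C(-b,b),$ where $C(-b,b)=\sup_{y\in[-b,b]}U(y)<+\infty$ comes from the envelope lemma and $b$ is a uniform bound on all $s_{\a_n}$. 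Without an estimate of this type your proof of equicontinuity does not go through.

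Two smaller issues. First, in your inf-compactness argument for $\tu$ you write $\tu(x)\geq H(x)-\bar{c}x-w$; the ACOE gives the opposite inequality, since $\tu(x)+w$ equals the minimum in \eqref{eqn:IC ACOE}, which is $\leq H(x)-\bar{c}x$. The growth as $x\to+\infty$ can still be recovered, because both branches of the minimum dominate $\E[h(x-D)]$ up to a constant for large $x$; alternatively the paper simply cites the general fact that inf-compactness of $c$ implies inf-compactness of $\tu$ (Feinberg et al.~\cite[Theorem 3 and Corollary 2]{FEINBERG2012}). Second, your identification of $(s^*,S^*)$ with $(s,S)$ via argmin convergence of $G_{\a_{n_k}}-\a_{n_k}m_{\a_{n_k}}$ is both unnecessary and shaky: $H$ may have several minimizers, so $S'_{\a_{n_k}}\to S$ need not hold. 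The theorem only requires that the $(s^*,S^*)$ policy of Theorem~\ref{thm:fle} satisfies \eqref{eqn:IC ACOE}, and this follows at once because that policy satisfies the ACOI \eqref{eqn:IC ACOE-1} while the ACOE supplies the reverse inequality, forcing equality; the two selections $(s^*,S^*)$ and $(s,S)$ need not coincide.
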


To prove Theorem \ref{thm:inventory:acoe}, we first state several auxiliary facts. Consider the renewal process
\begin{align*}
	\textbf{N}(t) :=  \sup \{ n=0,1,\ldots|\, \textbf{S}_n \leq t \},
\end{align*}
where $t\in\Rp,$ $\textbf{S}_0 = 0$ and $\textbf{S}_n = \sum_{j=1}^n D_j$ for $n=1,2,\ldots\ .$
Observe that since $P(D > 0) > 0,$ then $\E[\textbf{N}(t)] < +\infty,$ $t\in\Rp;$ see Resnick~\cite[Theorem 3.3.1]{Res92}.

Consider an arbitrary $\alpha\in [0,1)$ and a state $x_\alpha$ such that $u_\alpha(x_\alpha)=m_\a.$ Then, in view of \eqref{eq:boundmaxlul}, the inequalities $x^*_L\le x_\a\le x^*_U$ take place.

Define $E_y(x) := \E[h(x - \textbf{S}_{\textbf{N}(y)+1})]$ for $x\in\X,$ $y\ge 0.$  In view of Feinberg and Lewis~\cite[Lemma 6.2]{FEINBERG2015}, $E_y(x)
<+\infty.$ According to Feinberg and Lewis~\cite[inequalities (6.11), (6.17)]{FEINBERG2015}, for $x<x^\a$
\begin{align}
	u_{\a}(x) \leq K+\bar{c}(x^*_U - x),
	\label{eqn:ua ubd 1}
\end{align}
and for $x\geq x^\a$
\begin{align}
	u_{\a}(x) \leq K + (E(x) + \bar{c}\E[D])(1+\E[\textbf{N}(x-x^*_L)]),
	\label{eqn:ua ubd 2}
\end{align}
where 
$E(x) := h(x) + E_{x-x^*_L}(x).$ Let
\begin{align}
	U(x) :=
	\begin{cases}	
K+\bar{c}(x^*_U - x), & \text{if } x < x^*_L, \\
K+\bar{c}(x^*_U - x^*_L) + (E(x) + \bar{c}\E[D])(1+\E[\textbf{N}(x-x^*_L)]), & \text{if } x\ge x^*_L .
	\end{cases}
	\label{eqn:def U(x)}
\end{align}
\begin{lemma}\label{lm:U(x) bounded} The following inequalities hold for $\alpha\in [0,1):$
\begin{enumerate}[(i)]
	\item $u_\alpha(x)\le U(x) < +\infty$ for all $x\in\X;$
\item If $x_*,x\in\X$ and $x_*\le x,$ then $C(x_*,x):=\sup_{y\in [x_*,x]}  U(y)< +\infty;$
\item
 $\E[U(x-D)] < +\infty$ for all $x\in\X.$
\end{enumerate}
\end{lemma}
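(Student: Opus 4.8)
The plan is to prove the three parts in order, reducing (iii) to (ii) and (ii) to a single uniform renewal estimate, which is where the real work lies.

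For (i) I would bound $u_\a(x)$ by $U(x)$ via a case analysis governed by the position of $x$ relative to $x^*_L$ and to the point $x^\a,$ which satisfies $x^*_L\le x^\a\le x^*_U$ by \eqref{eq:boundmaxlul}. When $x<x^*_L$ we have $x<x^\a,$ so \eqref{eqn:ua ubd 1} gives $u_\a(x)\le K+\bar{c}(x^*_U-x)=U(x).$ When $x^*_L\le x<x^\a,$ inequality \eqref{eqn:ua ubd 1} together with $x\ge x^*_L$ yields $u_\a(x)\le K+\bar{c}(x^*_U-x^*_L),$ which is at most $U(x)$ because the remaining summand in \eqref{eqn:def U(x)} is nonnegative ($h\ge0$ forces $E(x)\ge 0,$ and $1+\E[\textbf{N}(x-x^*_L)]\ge1$). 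When $x\ge x^\a,$ \eqref{eqn:ua ubd 2} gives precisely $U(x)-\bar{c}(x^*_U-x^*_L)\le U(x).$ Finiteness $U(x)<+\infty$ then follows term by term: $h$ is real-valued, $E_y(x)<+\infty$ by \cite[Lemma 6.2]{FEINBERG2015}, $\E[D]<\infty,$ and $\E[\textbf{N}(t)]<+\infty$ by \cite[Theorem 3.3.1]{Res92}.

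For (ii) I would split $[x_*,x]$ at $x^*_L.$ On the part below $x^*_L$ the function $U$ is affine and decreasing, hence bounded by its value at $x_*.$ On the part $[\max\{x_*,x^*_L\},x]$ I would bound the three factors of the second branch of \eqref{eqn:def U(x)} separately: the renewal factor by monotonicity, $\E[\textbf{N}(y-x^*_L)]\le\E[\textbf{N}(x-x^*_L)],$ and $h(y)$ because a convex function attains its maximum over a compact interval at an endpoint. The main obstacle is the uniform bound $\sup_y E_{y-x^*_L}(y)<+\infty.$ Writing $z=y-x^*_L$ and letting $R(z):=\textbf{S}_{\textbf{N}(z)+1}-z\ge0$ denote the forward recurrence time, one has $E_{y-x^*_L}(y)=\E[h(x^*_L-R(z))].$ Since $0\le R(z)\le D_{\textbf{N}(z)+1}$ (the interarrival straddling $z$) and $h$ is convex and nonnegative, $h(x^*_L-R(z))\le h(x^*_L)+h(x^*_L-D_{\textbf{N}(z)+1}),$ and for $z\le x-x^*_L$ the index $\textbf{N}(z)+1$ lies in $\{1,\dots,\textbf{N}(x-x^*_L)+1\},$ so pathwise $h(x^*_L-D_{\textbf{N}(z)+1})\le\sum_{j=1}^{\textbf{N}(x-x^*_L)+1}h(x^*_L-D_j).$ Taking expectations and using a Wald-type identity --- the event $\{\textbf{N}(x-x^*_L)+1\ge j\}=\{\textbf{S}_{j-1}\le x-x^*_L\}$ depends only on $D_1,\dots,D_{j-1}$ and is independent of $D_j$ --- I obtain $\E\big[\sum_{j=1}^{\textbf{N}(x-x^*_L)+1}h(x^*_L-D_j)\big]=\E[h(x^*_L-D)]\,(1+\E[\textbf{N}(x-x^*_L)]),$ which is finite because $x^*_L\in\X$ makes $\E[h(x^*_L-D)]<\infty.$ This bounds $\sup_y E_{y-x^*_L}(y)$ and hence $C(x_*,x).$

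Finally, (iii) follows from (ii) by conditioning on the demand. On the event $\{D\le x-x^*_L\}$ one has $x-D\in[x^*_L,x],$ so $U(x-D)\le C(x^*_L,x)<+\infty$ by (ii); on the complementary event $x-D<x^*_L,$ the first branch of \eqref{eqn:def U(x)} gives $U(x-D)=K+\bar{c}(x^*_U-x)+\bar{c}D,$ whose expectation is finite since $\E[D]<\infty.$ Adding the two contributions yields $\E[U(x-D)]<+\infty.$ I expect the renewal estimate in (ii) to be the only delicate point; everything else is bookkeeping with the two branches of \eqref{eqn:def U(x)} and the nonnegativity of $h.$
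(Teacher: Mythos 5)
Your proof is correct, and its overall architecture---the case split at $x^*_L$ in parts (i) and (ii), and the reduction of (iii) to (ii) by conditioning on $\{x-D<x^*_L\}$ versus $\{x^*_L\le x-D\le x\}$---is the same as the paper's. Parts (i) and (iii) essentially coincide with the published argument (in (i) the paper splits cases at $x^*_U$ and takes a maximum of the two bounds on $[x^*_L,x^*_U),$ since the exact location of $x_\a$ inside $[x^*_L,x^*_U]$ is unknown, whereas you split at $x_\a$ itself; both are valid). The genuine difference is in the key estimate of part (ii). The paper does not prove the bound on $E_{y-x^*_L}(y)$: it imports inequality \eqref{eqEzy}, namely $E_z(y)\le (1+\E[\mathbf{N}(z)])\E[h(y-z-D)]+h(y),$ from Feinberg and Lewis~\cite{FEINBERG2015}, and then applies monotonicity of $\E[\mathbf{N}(\cdot)]$ and the endpoint property of convex functions exactly as you do. You instead derive the needed estimate from first principles: writing $E_{y-x^*_L}(y)=\E[h(x^*_L-R(z))]$ with the forward recurrence time $R(z)=\mathbf{S}_{\mathbf{N}(z)+1}-z\in[0,D_{\mathbf{N}(z)+1}],$ bounding $h(x^*_L-R(z))\le h(x^*_L)+h(x^*_L-D_{\mathbf{N}(z)+1})$ by convexity and nonnegativity of $h,$ dominating the straddling term pathwise by the full sum up to the stopping time $\mathbf{N}(x-x^*_L)+1,$ and evaluating that sum by Wald's identity, which is justified because $\{\mathbf{N}(x-x^*_L)+1\ge j\}=\{\mathbf{S}_{j-1}\le x-x^*_L\}$ is independent of $D_j$ and the summands are nonnegative, so Tonelli's theorem applies. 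This yields $E_{y-x^*_L}(y)\le h(x^*_L)+\E[h(x^*_L-D)](1+\E[\mathbf{N}(x-x^*_L)]),$ the same kind of bound as \eqref{eqEzy} up to constants. What your route buys is self-containedness: the lemma no longer leans on an external inequality from \cite{FEINBERG2015}, at the cost of a somewhat longer argument; the paper's route is shorter but delegates the real renewal-theoretic work to the citation.
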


\begin{proof}
(i) For $x < x^*_L$  the inequality $u_\alpha(x)\le U(x)$ holds because of \eqref{eqn:ua ubd 1}.  For $x\ge x^*_L$ denote by $f$ the function added to the constant $K$ in the right-hand side of \eqref{eqn:ua ubd 2},
 \begin{equation}\label{definition of f}
 f(x):=(E(x) + \bar{c}\E[D])(1+\E[\textbf{N}(x-x^*_L)]).
 \end{equation}
For $x\ge x^*_U,$ inequality  \eqref{eqn:ua ubd 2} and the inequality $u^*_U\ge x^*_L$ imply that
 \[
 u_\a(x)\le K+f(x)\le K+\bar{c}(x^*_U - x^*_L)+f(x)=U(x),
 \]
where the first inequality is \eqref{eqn:ua ubd 2}, for $x\ge u^*_U\ge x_\a,$ and the second inequality follows from $u^*_U\ge x^*_L.$  Thus, $u_\a(x)\le U(x)$ for  $x\ge x^*_U.$

 For $x^*_L\le x<u^*_U$
\begin{align*}
	u_\alpha(x) & \le K+\max\{ \bar{c}(x^*_U-x),f(x)\} \\
	& \le K+\bar{c}(x^*_U-x)+f(x)\le  K+\bar{c}(x^*_U-x^*_L)+f(x)=U(x),
\end{align*}
where the first inequality follows from
   \eqref{eqn:ua ubd 1}, \eqref{eqn:ua ubd 2}, and $x^*_L\le x_\a\le x^*_U,$ the second inequality holds because the maximum of two nonnegative numbers is not greater than their sum, and the last inequality follows from $x^*_L\le x_\a\le x^*_U.$   In addition, $U(x)<+\infty$ because all the functions in the right-hand side of \eqref{eqn:def U(x)} take real values.

(ii) For $x< x^*_L$
\begin{equation}\label{eqcaseiifff1}
C(x_*,x)\le \sup_{y\in [x_*,x^*_L)} U(y)\le K+\bar{c}(x^*_U-x_*)<+\infty.
\end{equation}

Let $x^*_L\le x_*.$  In this case,
\[
C(x_*,x)\le C(x^*_L,x)=K+\bar{c}(x^*_U - x^*_L)+\sup_{y\in [x^*_L,x]} f(y),
\]
where the function $f$ is defined in \eqref{definition of f} and $f(y)\le (E(y) + \bar{c}\E[D])(1+\E[\textbf{N}(x-x^*_L)])$ for $y\in [x^*_L,x].$  To complete the proof of
$C(x_*,x)<+\infty$ for $x^*_L\le x_*,$ we need to show that $\sup_{y\in [x^*_L,x]} E(y)<+\infty.$  This is true because of the following reasons.  First, by Feinberg and Lewis~\cite[inequalities (6.5), (6,6), and the inequality between them]{FEINBERG2015}, for $z\ge 0$ and $y\in\X$
 \begin{equation} \label{eqEzy}
   E_z(y)\le (1+\E[ {\bf N}(z)] )\E [h(y-z-D)] + h(y).
   \end{equation}

  Therefore, for $y\in [x^*_L,x]$
\begin{align*}
  E(y)&\le (1+\E [{\bf N}(y-x^*_L)])\E[ h(x^*_L-D)]+2h(y)\\
  &\le (1+\E [{\bf N}(x-x^*_L)])\E [h(x^*_L-D)]+2\max\{h(x^*_L), h(x)\}<+\infty,
\end{align*}
where the first inequality follows from the definition of the function $E(\cdot),$ introduced after \eqref{eqn:ua ubd 2}, and from \eqref{eqEzy}.  The second inequality follows from the convexity of $h$ and from $x^*_L\le y\le x.$ Thus, for $x^*_L\le x_*$
\begin{equation}\label{eqcaseiifff2}
C(x_*,x)\le C(x^*_L,x)=K+\bar{c}(x^*_U - x^*_L)+\sup_{y\in [x^*_L,x]} f(y)<+\infty.
\end{equation}

Now consider arbitrary $x_*,x\in\X$ such that $x_*\le x.$  Choose $z_*,z\in\X$ such that $z_*<\min\{x_*, x^*_L\} $ and $z>\max\{x,x^*_L\}.$  Then
\[ C(x_*,x)\le C(z_*,z)\le  \max\{ \sup_{y\in [z_*,x^*_L)} \{ U(y)\} ,C(x^*_L,z)\}<+\infty,\]
where the first inequality follows from $[x_*,x]\subset [z_*,z],$ the second inequality follows from $[z_*,z]=[z_*,x^*_L)\cup [x^*_L,z],$ and the last one follows from
\eqref{eqcaseiifff1} and from \eqref{eqcaseiifff2}.

(iii) Let us define $C(x_*,x)=0$ for $x_*,x\in \X$ and $x_*>x.$ For $x\in\X$
  \begin{align*}
  & \E[U(x-D)]=\E[U(x-D)I_{\{x-D<x^*_L\} }]+\E[U(x-D)I_{\{x^*_L\le x-D\le x\} }] \\
  \le &\E[(K+\bar{c}(x^*_U-x+D))I_{\{x-D<x^*_L\} }]+\E[C(x^*_L,x)I_{\{x^*_L\le x-D\le x\} }] \\
 \le & (K+\bar{c}(x^*_U-x))P(D>x-x^*_L)+ \bar{c}\E[D] +C(x^*_L,x)<+\infty,
  \end{align*}
where the first equality holds because $D$ is a nonnegative random variable, the first inequality follows from  the definitions of the functions $U$ and $C,$ the second inequality holds because an expectation of an indicator of an event is its probability and because the random variable $D$ and the constant $C(x^*_L,x)$ are nonnegative, and the last inequality follows from $\E[D]<\infty$ and from Lemma~\ref{lm:U(x) bounded}(ii).
\end{proof}

The next lemma establishes the equicontinity on $\X$ of a family of discounted relative value functions $\{u_{\a_n}\}_{n=1,2,\ldots}$ with $\alpha_n\uparrow 1.$

\begin{lemma}\label{lm:u a equicont}
	For each sequence $\{\alpha_n\uparrow 1\}_{n=1,2,\ldots}$ of nonnegative discount factors with $\alpha_1>\alpha^*,$ the family $\{u_{\a_n}\}_{n=1,2,\ldots}$ is equicontinuous on $\X.$
\end{lemma}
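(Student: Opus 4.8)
The plan is to exploit the $(\sa,\Sa)$ structure of the discount-optimal policy (Theorem~\ref{thm:fli}) to reduce equicontinuity of $\{\ua\}$ to a uniform (in $n$) Lipschitz bound on compact sets, and to obtain that bound from a renewal-type expansion of the derivative of $\Ga$. When $\X=\Z$ equicontinuity is automatic, since the singleton $\{x_0\}$ is a neighbourhood of each $x_0$; so assume $\X=\R$. By Theorem~\ref{thm:fle} the numbers $\sa,\Sa$, $\a=\a_n$, all lie in a bounded interval $[A,B]$. Put $\psi_\a(x):=\va(x)+\bar c x$; by the $(\sa,\Sa)$ form of \eqref{eqn:va}, $\psi_\a(x)=K+\Ga(\Sa)$ for $x<\sa$ and $\psi_\a(x)=\Ga(x)$ for $x\ge\sa$. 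Hence $\ua(x)=\va(x)-m_\a$ is affine with slope $-\bar c$ on $(-\infty,\sa)$ and equals $\Ga(x)-\bar c x-m_\a$ on $[\sa,\infty)$; since $\Ga$ is continuous and $\Ga(\sa)=K+\Ga(\Sa)$ by \eqref{eqn:def s}, $\ua$ is continuous across $\sa$. Thus a uniform Lipschitz bound for $\{\ua\}$ on each $[-T,T]$---hence equicontinuity---will follow once the derivative of $\Ga$ is bounded on compacts uniformly in $n$.

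To bound $\Ga'$, substitute $\va(y)=\psi_\a(y)-\bar c y$ into \eqref{GNG222} to get
\begin{equation*}
\Ga(x)=(1-\a)\bar c x+\a\bar c\,\E[D]+\E[h(x-D)]+\a\,\E[\psi_\a(x-D)],\qquad x\in\X.
\end{equation*}
Since $\psi_\a'=0$ on $(-\infty,\sa)$ and $\psi_\a'=\Ga'$ on $(\sa,\infty)$, differentiation (justified below) yields, for $x>\sa$,
\begin{equation*}
\Ga'(x)=(1-\a)\bar c+h_0'(x)+\a\,\E[\Ga'(x-D)\,I_{\{x-D>\sa\}}],
\end{equation*}
where $h_0(x):=\E[h(x-D)]$ is finite and convex, hence locally Lipschitz with $\bar h'(T):=\sup_{|y|\le T}|h_0'(y)|<\infty$. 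Iterating over the i.i.d.\ demands, with partial sums $\textbf{S}_k$ and using $\{x-\textbf{S}_k>\sa\}\subseteq\{x-\textbf{S}_j>\sa\}$ for $j\le k$, gives the renewal expansion
\begin{equation*}
\Ga'(x)=\sum_{k=0}^{\infty}\a^{k}\,\E\!\left[\bigl((1-\a)\bar c+h_0'(x-\textbf{S}_k)\bigr)I_{\{x-\textbf{S}_k>\sa\}}\right].
\end{equation*}
On $\{x-\textbf{S}_k>\sa\}$ the argument $x-\textbf{S}_k$ lies in the bounded interval $(\sa,x]$, so, bounding $\a^k\le 1$ and using $\{\textbf{S}_k\le t\}=\{\textbf{N}(t)\ge k\}$,
\begin{equation*}
|\Ga'(x)|\le(\bar c+\bar h'(T))\sum_{k=0}^{\infty}\PR(\textbf{S}_k\le x-\sa)=(\bar c+\bar h'(T))\bigl(1+\E[\textbf{N}(x-\sa)]\bigr),\qquad x\in[\sa,T].
\end{equation*}
As $\sa\ge A$ and $\E[\textbf{N}(\cdot)]$ is finite and nondecreasing, this is at most $(\bar c+\bar h'(T))(1+\E[\textbf{N}(T-A)])$, which is finite and independent of $n$. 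Together with the slope $-\bar c$ on $(-\infty,\sa)$, this gives $|\ua'|\le L(T)$ a.e.\ on $[-T,T]$ with $L(T)$ independent of $n$, so $\{\ua\}$ is equi-Lipschitz, hence equicontinuous, on $[-T,T]$; letting $T\to\infty$ proves the lemma.

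The crux---and the reason the hypothesis $\PR(D>0)>0$ is indispensable---is the uniformity in $n$ of the slope bound as $\a_n\uparrow1$. The naive estimate $\sum_k\a^k\PR(x-\textbf{S}_k>\sa)\le\sum_k\a^k=1/(1-\a)$ blows up as $\a\uparrow1$ and is useless; replacing it by the renewal function $1+\E[\textbf{N}(x-\sa)]$, finite precisely because $\PR(D>0)>0$ (the surplus inventory is eventually consumed by demand), is exactly what makes the bound uniform. This is the same mechanism that produced the dominating function $U$ in Lemma~\ref{lm:U(x) bounded}, and it fails when $\PR(D=0)=1$, consistent with the breakdown of the optimality inequality noted after item~\eqref{enum:demand}. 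The remaining technical point is to justify differentiating under the expectation: for each fixed $\a<1$ the function $\va$ (equivalently $\Ga$, $\psi_\a$) is locally Lipschitz---this follows from the renewal--reward representation of $\va$ under the optimal $(\sa,\Sa)$ policy, in which the initial state enters only through shifts of the locally Lipschitz $h$---so dominated convergence applies to the difference quotients of $\psi_\a$. Alternatively, the whole argument can be run directly at the level of the finite differences $\ua(x)-\ua(x')$, yielding the same renewal bound and avoiding derivatives altogether.
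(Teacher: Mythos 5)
Your proposal rests on the same mechanism as the paper's proof --- under the $(\sa,\Sa)$ policy the state descends by i.i.d.\ demands until it enters $(-\infty,\sa]$, so increments of $\ua$ are controlled by a local Lipschitz constant of $\E[h(\cdot-D)]$ times the renewal function $1+\E[\textbf{N}(\cdot)]$, which is finite precisely because $\PR(D>0)>0$ and, unlike the geometric bound $1/(1-\a)$, is uniform in $n$. However, there is a genuine gap at the pivot of your argument: the recursion for $\Ga'$, its iteration, and the final integration step all require $\Ga$ (equivalently $\va$, $\psi_\a$) to be locally Lipschitz and a.e.\ differentiable, with differentiation under the expectation justified, and this is never established. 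The paper only provides \emph{continuity} of $\va$ and $\Ga$ (Feinberg and Liang, Theorem 5.3); moreover $\Ga$ is $K$-convex, not convex, so the convexity-implies-local-Lipschitz fact you correctly use for $\E[h(\cdot-D)]$ does not apply to $\Ga$ itself. Your proposed justification --- that in the renewal--reward representation ``the initial state enters only through shifts of the locally Lipschitz $h$'' --- is not correct: the initial state $x$ also enters through the random horizon $\textbf{N}(x-\sa)$, through the discounting $\a^{\textbf{N}(x-\sa)+1}$ of the first order, and through the terminal value at $x-\textbf{S}_{\textbf{N}(x-\sa)+1}$, and it is exactly the sensitivity of these exit quantities in $x$ that must be controlled. (The per-$\a$ Lipschitz property is in fact provable, e.g.\ by value iteration, propagating Lipschitz constants via $\ell_{N+1}\le(1-\a)\bar c+L(T)+\a\,\ell_N$ on each $(-\infty,T]$; but this argument has to be supplied, and note that its limit is of order $L(T)/(1-\a)$, i.e.\ $\a$-dependent --- which is precisely why your renewal expansion is then still needed to get uniformity in $n$.)

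The gap is not cosmetic, because your fallback claim --- that the argument ``can be run directly at the level of the finite differences, yielding the same renewal bound and avoiding derivatives altogether'' --- is where the real difficulty hides, and it is false as stated. At the finite-difference level, with $z_1<z_2$ and $\delta=z_2-z_1$, the expansion acquires an extra boundary term from the sample paths that cross $\sa$ inside the increment window: with $\tau=\textbf{N}(z_1-\sa)+1$, this term is $\E[\a^{\tau}(\Ga(z_2-\textbf{S}_{\tau})-\Ga(\sa))I_{\{z_2-\textbf{S}_{\tau}\ge \sa\}}]$, where $z_2-\textbf{S}_{\tau}\in[\sa,\sa+\delta)$ on the indicated event. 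After dividing by $\delta$ this term is negligible only if one already has a Lipschitz-type bound for $\Ga$ just to the right of $\sa$: per-$\a$ suffices for your derivative route, but the finite-difference route needs it \emph{uniformly in $n$}. Controlling exactly this crossing term uniformly in $n$ is the heart of the paper's proof: it is the estimate \eqref{eqn:ubd ux uan}, $|u_{\a_n}(x)-u_{\a_n}(s_{\a_n})|<\epsilon/4$ for $x\in[s_{\a_n},s_{\a_n}+\delta_2]$, which the paper obtains from the conditioning identity \eqref{eqn:dif ua ubd}, the hypothesis $\PR(D=0)<1$, and the uniform bound $C(-b,b)$ of Lemma~\ref{lm:U(x) bounded}(ii). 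So either supply the per-$\a$ local Lipschitz property of $\Ga$ --- which would make your derivative argument complete, and arguably cleaner than the paper's, since it then bypasses the near-$\sa$ analysis entirely --- or adopt the paper's near-$\sa$ estimate; as written, the proposal assumes away the crux.
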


\begin{proof}  Before providing the proof, we would like to describe its main idea.   It is based on  estimating the difference between the total discounted costs incurred when the process starts from two states, $z_1$ and   $z_2,$ when the distance between $z_1$ and $z_2$ is small.  Let $z_1<z_2.$  This estimation is trivial when $z_2\le s_{\alpha_n}$ because the function $u_{\a_n}(x)$ is linear on $(-\infty, s_{\alpha_n}].$  By using Lemma~\eqref{lm:U(x) bounded}(ii), it is possible to derive such estimation for $z_1\le  s_{\alpha_n}< z_2.$ For $z_1>  s_{\alpha_n},$   the estimation consists of two parts: (i) the difference between the total holding costs incurred until the process, that starts at $z_1,$ reaches the set $(-\infty,s_{\a_n}],$ and this difference is small because of the Lipshitz continuity of the convex function $\E[h(x-D]$ on a bounded interval and because the average number of jumps is finite; (ii) the difference between the total costs incurred after the process, that starts at $z_1,$ reaches $(-\infty,s_{\a_n}],$ and this difference is small because it is bounded by the differences of the total costs for the two cased $z_2\le s_{\alpha_n}$ and $z_1\le  s_{\alpha_n}< z_2$ described above. Now we start the proof.

	The discounted-cost optimality equations \eqref{eqn:va} and the optimality of $(s_{\a_n},S_{\a_n})$ policies, stated in Theorem~\ref{thm:fli}, imply that the function $v_{\a_n} (x )$ is linear, when $x\le s_{\alpha_n},$ and
\begin{align}
	v_{\a_n} (x ) =
	\begin{cases}	
		\bar{c}(s_{\a_n} - x) + v_{\a_n}(s_{\a_n}), & \text{if } x \leq s_{\a_n}, \\
		\tilde{h}(x) + \a_n\E[v_{\a_n}(x-D)], & \text{if } x \geq s_{\a_n} ,
	\end{cases}
	\label{eqn:va two cases}
\end{align}
where $\tilde{h}(x) := \E[h(x-D)] < +\infty$ is convex in $x$ on $\X.$
According to Theorem \ref{thm:fle}, since each sequence $\{(s_{\a_n}, S_{\a_n})\}_{n=1,2,\ldots}$ is bounded, then
there exist a constant $b>0$ such that $s_{\a_n}\in (-b,b)$ and $S_{\a_n}\in (-b,b),$ $n=1,2,\ldots,$  and a constant $\delta_0 > 0$ such that $-b\leq s_{\a_n} - \delta_0 < s_{\a_n} + \delta_0\leq b,$ $n=1,2,\ldots\ .$

Consider $z_1,z_2\geq s_{\a_n}.$ Without loss of generality, assume that $z_1<z_2.$ According to \eqref{eqn:va two cases}, $v_{\a_n}(x) = \E[\sum_{j=1}^{\textbf{N}(x-s_{\a_n})+1} \a_n^{j-1}\tilde{h}(x-\textbf{S}_{j-1}) + \a_n^{\textbf{N}(x-s_{\a_n})+1}  v_{\a_n} (x-\textbf{S}_{\textbf{N}(x-s_{\a_n})+1})]$ for $x\geq s_{\a_n}.$ Therefore, for $n=1,2,\ldots$
\begin{align}
\begin{split}
	& |u_{\a_n}(z_1) - u_{\a_n}(z_2)| = |v_{\a_n}(z_1) - v_{\a_n}(z_2)|  \\
	= & {\Big |}\E[\sum_{j=1}^{\textbf{N}(z_1-s_{\a_n})+1} \a_n^{j-1}(\tilde{h}(z_1-\textbf{S}_{j-1}) - \tilde{h}(z_2-\textbf{S}_{j-1}))  \\
	+ & \a_n^{\textbf{N}(z_1-s_{\a_n})+1} (v_{\a_n}(z_1-\textbf{S}_{\textbf{N}(z_1-s_{\a_n})+1}) - v_{\a_n}(z_2-\textbf{S}_{\textbf{N}(z_1-s_{\a_n})+1}) )] {\Big |}  \\
	\leq & \E[\sum_{j=1}^{\textbf{N}(z_1-s_{\a_n})+1} |\tilde{h}(z_1-\textbf{S}_{j-1}) - \tilde{h}(z_2-\textbf{S}_{j-1})|]   \\
	+ &  \E[ |u_{\a_n}(z_1-\textbf{S}_{\textbf{N}(z_1-s_{\a_n})+1}) - u_{\a_n}(z_2-\textbf{S}_{\textbf{N}(z_1-s_{\a_n})+1})|] ,
\end{split}
	\label{eqn:ubd2}
\end{align}
where the inequality holds because of $\a_n < 1,$  the change of the expectations and the absolute values, and because the sum of absolute values is greater or equal than the absolute value of the sum.

Consider $\epsilon > 0.$  Define  a positive number $\bar{N} := \E[\textbf{N}(z_1+b)]+1 < +\infty.$ Since $b>-s_\alpha,$ then
$\E[\textbf{N}(z_1-s_{\a_n})]+1 \leq \bar{N}.$
Since the function $\tilde{h}(x)$ is convex on $\R$, then it is Lipschitz continuous on $[-b,z_2];$ see
Hiriart-Urruty and Lemar\'{e}chal~\cite[Theorem 3.1.1]{hl93}. Since Lipschitz continuity implies uniformly continuity, then there exists $\delta_1 \in (0,\delta_0)$ such that for $x,y\in [-b,z_2]$ satisfying $|x-y| < \delta_1,$ $|\tilde{h}(x) - \tilde{h}(y)| < \dfrac{\epsilon}{2\bar{N}}.$ Therefore, for $s_{\a_n}\leq z_1<z_2$ satisfying $|z_1 - z_2|<\delta_1$
\begin{align}
	|\tilde{h}(z_1-\textbf{S}_j) - \tilde{h}(z_2-\textbf{S}_j)| < \frac{\epsilon}{2\bar{N}}, \qquad j = 0,1,\ldots, \textbf{N}(z_1-s_{\a_n}) ,
	\label{eqn:lip cont}
\end{align}
and
\begin{align}
\begin{split}
	& \E[\sum_{j=1}^{\textbf{N}(z_1-s_{\a_n})+1} |\tilde{h}(z_1-\textbf{S}_{j-1}) - \tilde{h}(z_2-\textbf{S}_{j-1})|] 	
	\leq \E[\sum_{j=1}^{\textbf{N}(z_1-s_{\a_n})+1} \frac{\epsilon}{2\bar{N}} ]   \\
	 = & ( \E[\textbf{N}(z_1-s_{\a_n})] + 1) \frac{\epsilon}{2\bar{N}}
	\leq  \frac{\epsilon}{2}.
	\end{split}
	\label{eqn:ubd2-1}
\end{align}
where the first inequality follows from \eqref{eqn:lip cont} 
and the last inequality holds because of $\E[\textbf{N}(z_1-s_{\a_n})]+1 \leq \bar{N}.$

Additional arguments are needed to estimate the last term in \eqref{eqn:ubd2}.
Next we prove that there exists $\delta_2\in (0,\delta_1)$ such that for $x\in [s_{\a_n},s_{\a_n}+\delta_2],$
\begin{align}
	|u_{\a_n}(x) - u_{\a_n}(s_{\a_n})| < \frac{\epsilon}{4},  \qquad n=1,2,\ldots\ .
	\label{eqn:ubd ux uan}
\end{align}

Let $x\ge s_\a.$ Then formula \eqref{eqn:va two cases} implies 
\begin{equation}v_{\a_n} (x ) =
		\tilde{h}(x) + \a_n\E[v_{\a_n}(x-D)] \label{firstfrom423}
\end{equation}
and
\begin{align}
\begin{split}
&\E[v_{\a_n}(x-D)] =
\PR(D\geq x - s_{\a_n}) \E[\bar{c}(s_{\a_n} - x + D)|D\geq x - s_{\a_n}]  \\
&+ \PR(0<D< x - s_{\a_n}) \E[v_{\a_n}(x-D)|0<D< x - s_{\a_n}]
+ \PR(D = 0) v_{\a_n}(x)
\end{split}
\label{secondfrom423}
\end{align}
Formulas \eqref{firstfrom423} and \eqref{secondfrom423} imply
\begin{align}
[1&-\a_n\PR(D=0)]	v_{\a_n}(x)  =  \tilde{h}(x) + \a_n (\PR(D\geq x - s_{\a_n}) \E[\bar{c}(s_{\a_n} - x + D)|D\geq x - s_{\a_n}] \nonumber \\
& + \PR(0<D< x - s_{\a_n}) \E[v_{\a_n}(x-D)|0<D< x - s_{\a_n}]).
	\label{eqn: ua-1}
\end{align}
 Therefore, since $u_{\a_n}(y_1) - u_{\a_n}(y_2) = v_{\a_n}(y_1) - v_{\a_n}(y_2)   $ for all $y_1,y_2\in\X,$ for $x\in [s_{\a_n},s_{\a_n}+\delta_1]$ and for $n=1,2,\ldots$
\begin{align}
\begin{split}
	& [1-\a_n\PR(D=0)]|u_{\a_n}(x) - u_{\a_n}(s_{\a_n})| = [1-\a_n\PR(D=0)]|v_{\a_n}(x) - v_{\a_n}(s_{\a_n})|  \\
	&=   \Big |\tilde{h}(x)-\tilde{h}(s_{\a_n}) +\a_n\PR(D\geq x - s_{\a_n})\bar{c}(s_{\a_n} - x)  \\ & +\a_n\PR(0<D< x - s_{\a_n})\E[u_{\a_n}(x-D)
	-u_{\a_n}(s_{\a_n}-D)|0<D< x - s_{\a_n}]   \Big |  \\
	&\leq  |\tilde{h}(x)-\tilde{h}(s_{\a_n})|+\bar{c}(x-s_{\a_n})+ 2 \PR(0<D< x-s_{\a_n}) C(-b,b),
\end{split}
\label{eqn:dif ua ubd}
\end{align}
where the nonnegative function $C$ is defined in Lemma~\ref{lm:U(x) bounded}. Let us define $L:= (1-\PR(D=0))^{-1},$ and $Q(x,s_{\alpha_n}):=\PR(0<D< x-s_{\a_n}).$   Recall that $\PR(D>0)>0, $ which is equivalent to $\PR(D=0)<1.$ Since $(1-\a_n\PR(D=0))^{-1}\le L,$  formula \eqref{eqn:dif ua ubd} implies that for  $n=1,2,\ldots$
\begin{equation}
|u_{\a_n}(x) - u_{\a_n}(s_{\a_n})|\le L(|\tilde{h}(x)-\tilde{h}(s_{\a_n})|+\bar{c}(x-s_{\a_n}) + 2 Q(x,s_{\alpha_n}) C(-b,b)).
\end{equation}
Since the function $\tilde h$ is convex, it is Lipshitz continuous on $[-b,b].$ Therefore, all three summands in the right-hand side of the last equations converge uniformly in $n$ to 0 as $x\downarrow s_{\a_n}.$  Therefore, there exists $\delta_2\in (0,\delta_1)$ such that \eqref{eqn:ubd ux uan} holds for all $x\in [s_{\a_n},s_{\a_n}+\delta_2].$

Since $u_{\a_n} (x) = \bar{c}(s_{\a_n} - x)  + u_{\a_n} (s_{\a_n})$ for all $x\leq s_{\a_n},$ then for all $x,y\le s_{\a_n}$ 
\begin{align}
	|u_{\a_n}(x) - u_{\a_n}(y)| =\bar{c}|x-y|< \frac{\epsilon}{4},  \qquad n=1,2,\ldots\ ,
	\label{eqn:ubd u x < san}
\end{align}
for $|x-y|<\frac{\epsilon}{4{\bar c}}.$ Let $\delta_3:=\min\{\frac{\epsilon}{4{\bar c}},\delta_2\}.$ Then \eqref{eqn:ubd u x < san} holds for $|x-y|<\delta_3.$

For $x \leq s_{\a_n} \leq y$ satisfying $|x-y|< \delta_3$
\begin{align}
	|u_{\a_n}(x) - u_{\a_n}(y)| \leq |u_{\a_n}(x) - u_{\a_n}(s_{\a_n})| + |u_{\a_n}(s_{\a_n}) - u_{\a_n}(y)|
	< \frac{\epsilon}{2},
	\label{eqn:ubd x < san < y}
\end{align}
where the first inequality is the triangle property and the second one follows from \eqref{eqn:ubd ux uan} and \eqref{eqn:ubd u x < san}.
Therefore, \eqref{eqn:ubd ux uan}, \eqref{eqn:ubd u x < san} and \eqref{eqn:ubd x < san < y} imply that $|u_{\a_n}(x) - u_{\a_n}(y)| <\frac{\epsilon}{2}$ for all $x,y \leq s_{\a_n} + \delta_3$ satisfying $|x-y|< \delta_3.$
Then for $|z_1-z_2|< \delta_3$ with probability 1
\[
|u_{\a_n}(z_1-\textbf{S}_{\textbf{N}(z_1-s_{\a_n}) + 1}) - u_{\a_n}(z_2-\textbf{S}_{\textbf{N}(z_1-s_{\a_n}) + 1})|  < \frac{\epsilon}{2},  \qquad n=1,2,\ldots\ ,
\]
and therefore
\begin{align}
	 \E[|u_{\a_n}(z_1-\textbf{S}_{\textbf{N}(z_1-s_{\a_n}) + 1}) - u_{\a_n}(z_2-\textbf{S}_{\textbf{N}(z_1-s_{\a_n}) + 1})|]  < \frac{\epsilon}{2},  \qquad n=1,2,\ldots\ .
	\label{eqn:ubd2-2}
\end{align}
Formulae \eqref{eqn:ubd2}, \eqref{eqn:ubd2-1}. and  \eqref{eqn:ubd2-2} imply that for $z_1,$ $z_2\geq s_{\a_n}$ satisfying $|z_1-z_2|< \delta_3$
\begin{align}
	|u_{\a_n}(z_1) - u_{\a_n}(z_2)] | < \epsilon, \qquad n = 1,2,\ldots\ .
	\label{eqn:ubd x12>san}
\end{align}

 Therefore, \eqref{eqn:ubd u x < san}, \eqref{eqn:ubd x < san < y}, and \eqref{eqn:ubd x12>san}  imply that for each $x\in\X$
\begin{align*}
	|u_{\a_n}(x) - u_{\a_n}(y)] | < \epsilon, \qquad n = 1,2,\ldots,
\end{align*}
if $|x-y|<\delta_3, $ which means that the family $\{u_{\a_n}\}_{n=1,2,\ldots}$ is equicontinuous on $\X.$
\end{proof}

\begin{proof}[Proof of Theorem \ref{thm:inventory:acoe}]
Since $\int_{\X} U(y)q(dy|x,a)  = \E[U(x+a-D)],$ where the function $U$ is defined in \eqref{lm:U(x) bounded}, then, in view of Lemma \ref{lm:U(x) bounded}(iii), $\int_{\X} U(y)q(dy|x,a) < +\infty$ for all $x\in\X$ and $a\in \A.$ According to Lemma~\ref{lm:u a equicont}, the family $\{u_{\a_n}\}_{n=1,2,\ldots}$ is equicontinuous on $\X.$ Therefore, Theorem \ref{thm:acoe} implies that there exists a subsequence $\{ \a_{n_k} \}_{k=1,2,\ldots}$ of $\{ \a_n \}_{n=1,2,\ldots}$ such that there exists a policy $\varphi$ satisfying  ACOE \eqref{EQN11} with $\tu$ defined in \eqref{EQN1} for the sequence $\{\a_{n_k}\}_{k=1,2,\ldots},$ the function $u_{\a_{n_k}}$ converges pointwise to $\tu,$ and the function $\tu$ is continuous.

According to Feinberg and Lewis~\cite[Theorem 6.10]{FEINBERG2015}, the $(s^*,S^*)$ policy satisfies the ACOI with $\tu$ defined in \eqref{EQN1} for the sequence $\{\a_{n_k}\}_{k=1,2,\ldots}.$ Since the ACOE holds with $\tu$ defined in \eqref{EQN1} for the sequence $\{\a_{n_k}\}_{k=1,2,\ldots},$ then the $(s^*,S^*)$ policy satisfies the ACOE.

Next we show that the functions $\tu$ and $H$ are $K$-convex and inf-compact.  Since the cost function $c$ is inf-compact, the function $\tilde u$ is inf-compact; see Feinberg et al.~\cite[Theorem 3 and Corollary 2]{FEINBERG2012}.
According to Feinberg and Lewis~\cite[Lemma 6.8]{FEINBERG2015}, the functions $v_{\a_n}$ are $K$-convex.  Therefore the functions $u_{\a_n}$ are $K$-convex. Since $u_{\a_{n_k}}$ converges pointwise to $\tu,$ then the function $\tu$ is $K$-convex. The function $H$ is $K$-convex because, in view of \eqref{eqn:IC ACOE-2}, it is a sum of a linear, convex, and $K$-convex functions.

Since the $(s^*,S^*)$ policy satisfies the ACOE \eqref{eqn:IC ACOE} with $\tu$ defined in \eqref{EQN1} for the sequence $\{\a_{n_k}\}_{k=1,2,\ldots},$ then $\tu(x) = K + H(S^*) - \bar{c}x - w,$ for all $x < s.$ Therefore, for $x < s,$
\begin{align}
\begin{split}
	H(x) & = \bar{c}x +\E[h(x-D)] + \E[\tu (x-D)]  \\
	& = \bar{c}x +\E[h(x-D)] + K + H(S^*) - \bar{c}x + \bar{c}\E[D] - w  \\
	& = \E[h(x-D)] +  K + H(S^*) + \bar{c}\E[D] - w .
\end{split}
\label{eqn:H(x) x < s}
\end{align}
Since $\E[h(x-D)]\to +\infty$ as $x\to -\infty,$ then \eqref{eqn:H(x) x < s} implies that $H(x)$ tends to $+\infty$ as $x\to -\infty.$
Since $h$ and $\tu$ are nonnegative, then \eqref{eqn:IC ACOE-2} implies that $H(x) \geq \bar{c}x \to +\infty$ as $x\to +\infty.$ Therefore, $H(x)\to +\infty$ as $|x|\to +\infty.$

 Since $\tu$ is continuous and $(y-D)$ converges weakly to $(x-D)$ as $y\to x,$ then $\E[\tu(x-D)]$ is lower semi-continuous. Since $\E[h(x-D)]$ is convex on $\X$ and hence continuous, $\bar{c}x$ is continuous and $\E[\tu(x-D)]$ is lower semi-continuous, then $H$ is lower semi-continuous. Therefore, since $H(x)$ tends to  $+\infty$ as $|x|\to +\infty,$ then $H$ is inf-compact.

According to the statements following Feinberg and Lewis~\cite[Lemma 6.7]{FEINBERG2015}, since $H$ is $K$-convex, inf-compact, and tends to $+\infty$ as $|x|\to +\infty,$
then an $(s,S)$ policy, with the real numbers $S$ and $s$ satisfying \eqref{eqn:def S} and  defined in   \eqref{eqn:def s} respectively for $f(x)=H(x),$ $x\in\X,$ is optimal.

Now we prove that the function $H$ is continuous. Let us fix an arbitrary $y\in\X.$ Define the following function
\begin{align*}
	\bar{H}(x) =
	\begin{cases}
		\tu(x) + \bar{c}x,  		& \text{if } x\leq y + 1 , \\
		\tu(y+1) + \bar{c}(y+1),  	& \text{if } x > y + 1 . \\
	\end{cases}
\end{align*}
Since the functions $\tu (x)$ and $\bar{c}x$ are continuous, then the function $\bar{H} (x)$ is continuous.
In view of \eqref{eqn:IC ACOE}, the function $\bar{H} (x)$ is bounded on $\X.$ Therefore,
\begin{align}
	\lim_{z\to y} \{ \E[h(z-D)] + \E[\bar{H} (z-D)] \} = \E[h(y-D)] + \E[\bar{H} (y-D)] ,
	\label{eqn:limit bar H}
\end{align}
where the equality holds since the function $\E[h(x-D)]$ is convex
on $\X$ and hence it is continuous, and $z-D$ converges weakly to $y-D$ as $z\to y$ and the function
$\bar{H} (x)$ is continuous and bounded.

Observe that $H (x) = \E[h(x-D)] + \E[\bar{H} (x-D)] + \bar{c}\E[D]$ for all $x\leq y + 1.$
Therefore, \eqref{eqn:limit bar H} implies that $\lim_{z\to y}H (z) = H (y).$ Thus the function
$H (x)$ is continuous.
\end{proof}

\begin{corollary}\label{thm:ordering at sa}

For the $(s,S)$ policy defined in Theorems \ref{thm:inventory:acoe}, consider the stationary policy $\varphi$ coinciding with this policy at all $x\in\X,$ except $x=s,$ and with   $\varphi(s)=S-s.$  Then the stationary policy $\varphi$ also satisfies the optimality equation~\eqref{eqn:IC ACOE}, and therefore the policy $\varphi$ is average-cost optimal.

\end{corollary}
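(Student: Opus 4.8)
The plan is to verify that the modified policy $\varphi$ satisfies the average-cost optimality equation~\eqref{eqn:IC ACOE} at every state $x\in\X$ and then invoke statement~(ii) of Theorem~\ref{thm:acoe}, according to which every stationary policy satisfying the ACOE is average-cost optimal. Since $\varphi$ coincides with the $(s,S)$ policy at all $x\neq s$, and that policy already satisfies \eqref{eqn:IC ACOE} by Theorem~\ref{thm:inventory:acoe}, the equation holds automatically at every state other than $s$. Hence the whole argument reduces to checking the single state $x=s$.

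The key ingredient at $x=s$ is the identity $H(s)=K+H(S)$. First I would establish this identity. Since $H$ is continuous (Theorem~\ref{thm:inventory:acoe}) and $H(x)\to+\infty$ as $x\to-\infty$, the set $\{x\le S:\, H(x)\le K+H(S)\}$ appearing in \eqref{eqn:def s} is closed and bounded below, so its infimum $s$ belongs to it, which gives $H(s)\le K+H(S)$. On the other hand, for every $x<s$ one has $H(x)>K+H(S)$ by the definition of the infimum, and letting $x\uparrow s$ and using continuity yields $H(s)\ge K+H(S)$. Combining the two inequalities gives $H(s)=K+H(S)$.

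With this identity in hand, I would evaluate both expressions in \eqref{eqn:IC ACOE} at $x=s$. Because $S$ minimizes $H$, the right-hand side equals $\min\{\,K+H(S),\,H(s)\,\}-\bar{c}s$, and by the identity this is $H(s)-\bar{c}s$. The ordering decision $\varphi(s)=S-s$, which is positive when $s<S$, makes the middle expression equal $KI_{\{\varphi(s)>0\}}+H(S)-\bar{c}s=K+H(S)-\bar{c}s$, which by the same identity again equals $H(s)-\bar{c}s$. Thus both the middle and right expressions of \eqref{eqn:IC ACOE} coincide at $x=s$ for the policy $\varphi$, so the ACOE holds there as well. (When $s=S$ the decision $\varphi(s)=0$ coincides with not ordering, and the claim is immediate.)

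Consequently $\varphi$ satisfies \eqref{eqn:IC ACOE} at all $x\in\X$, and statement~(ii) of Theorem~\ref{thm:acoe} yields its average-cost optimality. The only step requiring any work is the identity $H(s)=K+H(S)$, and there the sole subtlety is verifying that the infimum in \eqref{eqn:def s} is attained; this rests entirely on the continuity and coercivity of $H$ already established in Theorem~\ref{thm:inventory:acoe}, so I anticipate no further obstacle.
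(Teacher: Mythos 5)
Your proof is correct and takes essentially the same approach as the paper: both arguments hinge on the identity $H(s)=K+H(S)$, obtained from the continuity of $H$ together with the defining inequalities of $s$, and then conclude that ordering up to $S$ and not ordering are both optimal actions at the state $s$. Your version merely spells out details the paper leaves implicit, namely the attainment of the infimum in \eqref{eqn:def s} via continuity and coercivity of $H$, the explicit verification of \eqref{eqn:IC ACOE} at $x=s$, and the degenerate case $s=S$.
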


\begin{proof} Since the proof of the optimality of $(s,S)$ policies is based on the fact that
$K+H(S)<H(x),$ if $x<s,$ and $K+H(S)\ge H(x),$ if $x\ge s.$  Since the function $H$ is continuous, we have that $K+H(S)=H(s).$ Thus both actions are optimal at the state $s.$ \end{proof}

{\bf Acknowledgement.}
 This research  was partially supported by NSF grants CMMI-1335296 and CMMI-1636193.

\end{document}